\newtheorem{proposition}{Proposition}[section]
\newtheorem{example}[proposition]{Example}
\newtheorem{lemma}[proposition]{Lemma}
\newtheorem{corollary}[proposition]{Corollary}
\newtheorem{theorem}[proposition]{Theorem}
\newtheorem{remark}[proposition]{Remark}
\newcommand{\dpt}{\operatorname{depth}}
\newcommand{\hgt}{\operatorname{height}}
\newcommand{\rank}{\operatorname{rank}}
\newcommand{\spn}{\operatorname{span}}
\newcommand{\gen}{\operatorname{gen}}
\newcommand{\codim}{\operatorname{codim}}
\newcommand{\im}{\operatorname{Im}}
\newcommand{\Der}{\operatorname{Der}}
\newcommand{\Vect}{\operatorname{Vect}}
\newcommand{\La}{\Lambda}
\newcommand{\Om}{\Omega}
\def \R     {\Bbb R}
\def \C     {\Bbb C}
\def \AA    {{\mathcal A}}
\def \II    {{\mathcal I}}
\def \JJ    {{\mathcal J}}
\def \OO    {{\mathcal O}}
\def \a      {\alpha}
\def \b      {\beta}
\def \O     {\Omega}
\def \o     {\omega}
\def \g     {\gamma}
\def \w     {\wedge}
\def \lra   {\longrightarrow}
\def \nzd  {non-zero-divisor\ }
\def \nzds  {non-zero-divisors\ }
\def \Rloc {R_{[a]}}
\def \Mloc {M_{[a]}}
\def \Iang {\langle I\rangle}
\def \Jang {\langle J\rangle}
\def \Kang {\langle K\rangle}
\def \po {\partial_{\omega}}
\def \tg  {\tilde\gamma}
\def \bR  {\bar R}
\def \bM {\bar M}
\def \bK {\bar K}
\def \bS {\bar S}
\def \ba {\bar a}
\def \bb  {\bar b}
\def \bc  {\bar c}
\def \be  {\bar e}
\def \bo  {\bar\omega}
\def \bg {\bar\gamma}
\def \bO  {\bar\Omega}
\def \ba  {\bar a}
\def \hJ {\hat J}
\def \hf {\hat f}
\def \hg {\hat\gamma}
\def \fracR {R_{fr}}
\def \fracM {M_{fr}}
\def \fracK {K_{fr}}
\def \Res {\mathcal{R}es}
\def \owedge {\bigcirc\!\!\!\!\!\!\wedge\ }
\begin{document}

\title{Division properties in exterior algebras of free modules\\
and logarithmic residua}
\author{B. Jakubczyk} 
\address{Institute of Mathematics, Polish Academy of Sciences\\
00-656 Warsaw, Śniadeckich 8, Poland\\
E-mail: b.jakubczyk@impan.pl}
\keywords{Exterior algebra over free module, divisibility, depth, logarithmic residua.}

\begin{abstract}
Let $M$ be a free module of rank $m$ over a commutative unital ring $R$ and let $N$ be its free submodule. We consider the problem when a given element of the exterior product $\Lambda^pM$ is divisible, in a sense, over elements of the exterior product $\Lambda^r N$, $r\le p$. Precisely, we give conditions under which an element $\eta\in\Lambda^pM$ can be expressed as a finite sum of skew-products of elements of $\Lambda^r N$ and elements of $\Lambda^{p-r} M$. For a given basis $\omega_1,\dots,\omega_k$ in $N$ the elements of  $\Lambda^{p-r} M$ are unique in a specified sense. Necessary and sufficient conditions for such divisibility take a simple form, provided that the submodule is embedded in $M$ with singularities having the depth larger then $p-r+1$.  In the special case where $r=k=rank N$ the divisibility property means that $\eta=\Omega\wedge\gamma$ where $\Omega=\omega_1\wedge\cdots\wedge\omega_k$  and $\gamma\in\Lambda^{p-k}M$.

More detailed statements of these results are then used to state criteria for existence and uniqueness of algebraic logarithmic residua when the ``divisor'' is defined by elements $f_1,\dots,f_k\in R$. Special cases are multidimensional logarithmic residua in complex analysis.
\end{abstract}    
\maketitle
	
\section{Introduction}

Consider a commutative unital ring  $R$ and let $M$ be a free $R$-module of finite rank $m$. We will denote by $\La^q M$ the $q$th exterior product of $M$ and $\La M=\oplus_{q=0}^m \La^q M$ will be the exterior algebra generated by $M$.  
We shall consider division problems in the algebra $\La M$ related to the following elementary questions.

Consider two elements $\eta\in \La^pM$ and $\a\in \La^rM$. If $p\ge r$ then it is natural to ask

\medskip
{\bf Question 1.} {\it When $\eta$ is divisible over $\a$, i.e., there exists  $\g\in\La^{p-r}M$ such that 
\[
\eta=\a\w\g\ ?
\]}
This question is largely open for general $R$. In the case where  $R$ is the field of real numbers $\R$ and $M$ is the vector space $\R^m$ the following general result of Dacorogna and Kneuss \cite{DK} answers the question. There exists such $\g$ if and only if $\ker \a\subset \ker \eta$, where one defines  $\ker \a=\{\b\in \La^{m-p}\R^m|\a\w\b=0\}$ and  $\ker \eta=\{\b\in \La^{m-p}\R^m|\eta\w\b=0\}$.

For general $R$ we will answer Question 1 in the special case when $\a$ is a product of $\o_i\in M$. Namely, let $\o_1,\dots,\o_k$ be fixed elements of $M$, where $1\le k\le m$. We will answer

\medskip
{\bf Question 2.} {\it When there exists $\g\in \La^{p-k}M$ such that 
\[
\eta=\o_1\w\cdots\w\o_k\w\g?
\]}
Question 2, similarly as two other ones stated below, is nontrivial when the submodule of $M$ generated by the elements $\o_1,\dots,\o_k$ is singularly embedded in $M$ which means that the elements are linearly independent but can not be completed to a basis of $M$.

We can also ask if for a given $\eta\in M$ it is possible to find elements $a_i\in R$ such that $\eta=\sum_i a_i\o_i$. More generally, 

\medskip
{\bf Question 3.} {\it Given $\eta\in \La^pM$, is it possible to represent $\eta$ as
\[
\eta=\sum_i\o_i\w\g_i,
\]
with some $\g_i\in \La^{p-1}M$?} 
\medskip

An answer to this question was given in the main theorem of \cite{S} for Noetherian rings $R$. Questions 2 and 3 are particular cases of the following main problem considered in the paper.

\medskip
{\bf Question 4.} {\it
Given $\eta\in\La^pM$ and $1\le r\le k$ such that $p\ge r$, when one can write
\[
\eta=\sum \o_J\w\g_J \eqno{(R)}
\]
with some $\g_J\in \La^{p-r}M$, where $\o_J\in \La^rM$ are of the form $\o_J=\o_{j_1}\w\cdots\w\o_{j_r}$?}
\medskip

Such questions appear in differential geometry when one considers k-tuples of differential 1-forms or vector fields on a manifold $N$. Singularities of such tuples (points where they are linearly dependent) are unavoidable if the topology of the manifold is not sufficiently trivial, see \cite{K}. Already for $k=1$ and nonzero Euler characteristic of $N$ every smooth vector field or differential 1-form on $N$ has a singular point. Also any holomorphic 1-form on a smooth complex projective variety of general type must vanish at some point \cite{PS}. Answers to above questions may help to deal with such local singularities (as was the case in \cite{JZ}), in which case $R$ will be the ring of germs of functions on manifolds in the holomorphic, real analytic, or $C^\infty$ category (where $R$ is non-Noetherian).

Our first main result, statement (i) in Theorem \ref{thm1-existence} in the next section, will give a criterium for $\eta$ to be representable in the form (R). To have such a representation we will assume that $p-r+2<d$, where $d$ is the depth of the ideal $I(\O)$ generated by the coefficients of the k-form 
\[
\O=\o_1\w\cdots\w\o_k.
\] 
Without this assumption an analogous representation holds for $\eta$ replaced by $a^n\eta$ with $a$ an arbitrary element of the ideal $I(\O)$ and $n$ sufficiently large (statement (ii) of Theorem \ref{thm1-existence}). The proof of Theorem \ref{thm1-existence} is postponed to Section 6. A uniqueness property of the representation is stated in Theorem \ref{thm1-uniqueness}. Finally, an equivalent statement of Theorem  \ref{thm1-existence}, formulated as a property of a (singularly embedded) free submodule of $M$, is given in  Theorem \ref{thm2}. In the Appendix we recall properties of the depth used in stating some of the results.

In Section 3 we state direct consequences of results from Section 2 in the case where, in addition to the earlier data, we are given a finitely generated ideal $\II\subset R$. In this case we define residua as results of the division discussed in Secton 2 factorized via the submodule $\II M$ and state a general theorem on their existence and uniqueness. 

More specific residua, called logarithmic, are defined in Section 4 where $M$ is the module of derivations of $R$ and the elements $\o_i\in M$ are replaced with the algebraic differentials $df_i\in M^*$ of generators $f_1,\dots,f_k$ of $\II$. Finally, in Section 5 we translate the theorem on algebraic logarithmic residua to a theorem on multidimensional residua in complex analysis. 

We do not assume that the ring $R$ and the module $M$ are Noetherian which makes the proofs more delicate. 

\section{Main results}\label{Main results}

Let $R$ denote a commutative ring with unity and let $M$ be a free module over $R$ of finite rank $m$. We will denote its $q$th exterior product by $\La^qM$, with identifications $\La^0M=R$, $\La^1M=M$, and $\La^qM=0$ for $q<0$ and $q>m$. Note that, choosing a basis $e_1,\dots,e_m$ in $M$ and $q\in\{1,\dots,m\}$, we have a natural basis in $\La^qM$ which consists of exterior products
\[
e_I=e_{i_1}\w\cdots\w e_{i_q}, \ \ 1\le i_1<\cdots<i_q\le m. \eqno{(E)}
\]
Elements $\eta\in\La^qM$ can be written as $\eta=\sum_Ia_Ie_I$, with unique coefficients $a_I\in R$, $I=i_1\cdots i_q$.

Consider a sequence of elements
\[
\o_1,\dots,\o_k\in M,
\] 
$1\le k\le m$, which will be fixed throughout the section.  We will use the following notation.  For a given $0\le r\le k$ we will denote by $\JJ(r,k)$ the set of \emph{strictly monotone multiindices} 
\[
\JJ(r,k)=\{J=j_1\cdots j_r:\,1\le j_1<\cdots<j_r\le k\},
\]
including the empty one $J=\emptyset$ with $r=0$. Sometimes, when $r$ is not fixed, it will be convenient to denote the length $r$ of $J$ by $\Jang$.  Given $J\in\JJ(r,k)$, we will denote
\[
\o_J=\o_{j_1}\w\cdots\w\o_{j_r}. 
\]
where $\o_J=1$ when $J=\emptyset$. 

Denote $\O=\o_1\w\cdots\w\o_k$. Using the basis (E) with $q=k$ we can write $\Om=\sum_I a_Ie_I$ where $a_I\in R$ and the sum is taken over $I\in \JJ(k,m)$. The ideal generated by the coefficients $a_I$,
\[
I(\Om)=\spn_R\{a_I, I\in \JJ(k,m)\},
\]
is independent of the choice of the basis in $M$ and will play a crutial role in our considerations. 

Recall that the depth of a proper ideal $I\subset R$, denoted $\dpt I$, is the largest length of a regular sequence in $I$ (see Appendix for a brief recall of properties of regular sequences). In Noetherian rings we have $\dpt I\le \hgt I\le \gen I$ (in Cohen-Macaulay rings $\dpt I=\hgt I$), where $\gen I$ denotes the minimal number of generators of the ideal $I$.  Additionally, one defines $\dpt R=\infty$. Recall that $\rank M=m$.

\begin{theorem}[Existence]\label{thm1-existence}
Fix integers $1\le p,r,s\le m$ such that $p\ge r$ and $r+s=k+1$.  

(i) Assume  that
\[
p-r\le \dpt I(\Om)-2.  \eqno{(DC)}
\]
Then an element  $\eta\in \La^p M$ can be represented in the form
\[
\eta=\sum_{J\in \JJ(r,k)}\o_J\w\g_J, \ \ \text{for\ some}\  \ \g_J\in \La^{p-r}M, \eqno{(A)}
\]
if and only if
\[
\o_I\w\eta=0,\ \ \text{for\ all}\ \ I\in \JJ(s,k).\eqno{(B)}
\] 

(ii) Without assuming the depth condition (DC), for any $\eta\in \La^pM$ satisfying (B) there exists $n>0$ (not depending on $\eta$ if the module of $\eta$ satisfying (B) is finitely generated) such that for $b=a^n$, with arbitrary $a\in I(\Omega)$, we have 
\[
b\eta=\sum_{J\in\JJ(r,k)} \o_J\w\g_J, \ \ \text{for\ some}\  \ \g_J\in \La^{p-r}M. \eqno{(A')}
\]
Conversely, if (A') holds for some \nzd $b\in R$ then (B) holds, too.
\end{theorem}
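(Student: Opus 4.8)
\emph{Proof proposal.}

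I would separate the two ``formal'' implications from the substantial one. Package the data as two $R$-linear maps between free modules,
\[
\Phi\colon \bigoplus_{J\in\JJ(r,k)}\La^{p-r}M\lra \La^pM,\qquad (\g_J)\mapsto \sum_J\o_J\w\g_J,
\]
and
\[
\Psi\colon \La^pM\lra \bigoplus_{I\in\JJ(s,k)}\La^{p+s}M,\qquad \eta\mapsto (\o_I\w\eta)_I .
\]
Then (A) says $\eta\in\im\Phi$ and (B) says $\eta\in\Ker\Psi$. The inclusion $\im\Phi\subseteq\Ker\Psi$ is pure pigeonhole: $\o_I\w\o_J$ is a wedge of $|I|+|J|=s+r=k+1$ of the vectors $\o_1,\dots,\o_k$, so some $\o_i$ repeats and $\o_I\w\o_J=0$. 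This gives (A)$\Rightarrow$(B), and also the converse half of (ii), since $\o_I\w(a^n\eta)=a^n(\o_I\w\eta)=0$ and $a$ a \nzd forces $\o_I\w\eta=0$. Everything therefore reduces to the reverse inclusion $\Ker\Psi\subseteq\im\Phi$, i.e.\ to the vanishing of $H:=\Ker\Psi/\im\Phi$.

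Next I would dispose of the \emph{nonsingular case}, where some coefficient $a_{I_0}$ of $\O$ is a unit. Then $\o_1,\dots,\o_k$ extend to a basis $e_1=\o_1,\dots,e_k=\o_k,e_{k+1},\dots,e_m$ of $M$, and the claim becomes transparent: writing $\eta=\sum_K c_Ke_K$ and setting $\nu(K)=|K\cap\{1,\dots,k\}|$, one checks directly that $\im\Phi=\spn_R\{e_K:\nu(K)\ge r\}$, while (B) holds iff $c_K=0$ whenever one can choose $s=k+1-r$ special indices disjoint from $K$, i.e.\ whenever $\nu(K)\le r-1$; hence $\Ker\Psi=\spn_R\{e_K:\nu(K)\ge r\}$ as well, so $H=0$ with no depth hypothesis, the $\g_J$ being produced by an explicit Cramer-type universal formula.

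For (ii) I would localize. Over $R_{a_{I_0}}$ each generator $a_{I_0}$ of $I(\Om)$ is a unit, so the nonsingular case applies, and clearing denominators by the explicit formula yields $a_{I_0}^{N}\eta\in\im\Phi$ over $R$ with $N$ bounded by $p$ uniformly in $\eta$. If $I(\Om)$ has generators $a_{I_1},\dots,a_{I_g}$ and $a=\sum c_{I_j}a_{I_j}\in I(\Om)$, then expanding $a^n$ and applying pigeonhole to the exponents shows that for $n\ge g(N-1)+1$ every monomial of $a^n$ is divisible by some $a_{I_j}^{N}$, whence $a^n\eta\in\im\Phi$; this $n$ is uniform in both $a$ and $\eta$, giving (A$'$).

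Finally (i). Assemble $\Phi,\Psi$ into a complex $C_\bullet$ of free modules (a strand of the Koszul/Eagon--Northcott complex of $\phi\colon R^k\to M$, $\epsilon_i\mapsto\o_i$, whose maximal-minor ideal is exactly $I(\Om)$), so that $H$ is its homology at $\La^pM$; part (ii) shows $H$ is $I(\Om)$-power-torsion. I would then induct on $d=\dpt I(\Om)$: since (DC) gives $d\ge1$, choose a \nzd $x\in I(\Om)$ and use the short exact sequence $0\to C_\bullet\xrightarrow{\,x\,}C_\bullet\to C_\bullet\otimes_R\bR\to 0$ with $\bR=R/xR$. The quotient complex is the same construction over $\bR$, where $\dpt I(\Om)\bR=d-1$, so (when $d\ge p-r+3$) the induction hypothesis kills its homology in the degree adjacent to $\La^pM$; the homology long exact sequence then forces $x$ to be a \nzd on $H$. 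A module that is simultaneously $x$-power-torsion and $x$-torsion-free is zero, so $H=0$ and (B)$\Rightarrow$(A). I expect the real work at the bottom of this induction, the boundary case $d=p-r+2$ allowed by (DC), where the long exact sequence no longer supplies vanishing over $\bR$ and a direct argument is needed; entangled with this is the non-Noetherian bookkeeping, namely securing the uniformity of $N$ in the clearing-denominators step and the property $\dpt I(\Om)\bR=d-1$ (recorded in the Appendix) without finiteness hypotheses.
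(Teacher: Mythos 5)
Your handling of the two formal implications, your nonsingular case, and your proof of statement (ii) all coincide with the paper's own route (Remark \ref{rem1}, Lemma \ref{lem1}, and the localization-at-a-coefficient argument of Section 6); your multinomial pigeonhole for passing from the coefficients $a_{I_j}$ to a general $a\in I(\O)$ even makes the uniformity of $n$ slightly more explicit than the paper does. The problem is statement (i), where your induction on $d=\dpt I(\O)$ has a genuine gap at its central step.

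Write the long exact sequence out. With $C_{q+1}=\bigoplus_{J\in\JJ(r,k)}\La^{p-r}M$, $C_q=\La^pM$, $C_{q-1}=\bigoplus_{I\in\JJ(s,k)}\La^{p+s}M$ and $\bar C_\bullet=C_\bullet\otimes_R\bR$, $\bR=R/xR$, it reads
\[
H_{q+1}(\bar C_\bullet)\lra H_q(C_\bullet)\mathop{\lra}^{\ x\ } H_q(C_\bullet)\lra H_q(\bar C_\bullet),
\]
so injectivity of $x$ on $H=H_q(C_\bullet)$ requires the vanishing of $H_{q+1}(\bar C_\bullet)$, the homology at $\bigoplus_J\La^{p-r}\bM$ — that is, the assertion that every relation $\sum_J\bo_J\w\bg_J=0$ is generated by the evident syzygies. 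This is \emph{not} an instance of the theorem for any choice of parameters: a cycle there is a tuple $(\bg_J)$ constrained by a single equation, not a single form $\eta$ constrained by the conditions (B), so your induction hypothesis (the theorem over $\bR$ with depth $d-1$) says nothing about it. For $k>1$ the complex is not a Koszul complex, you never specify the map $C_{q+2}\to C_{q+1}$ whose image would have to fill $\Ker\bar\Phi$, and the Eagon--Northcott complexes you allude to are exact only under the expected-grade hypothesis $\dpt I(\O)\ge m-k+1$, far stronger than (DC). The half of the sequence your hypothesis does control is the right-hand one, and it yields only the injection $H/xH\hookrightarrow H_q(\bar C_\bullet)=0$, i.e. $H=xH$; but $x$-divisibility together with $I(\O)$-power-torsion does not force $H=0$ in the stated generality, since $R$ is not assumed Noetherian, so the subquotient $H$ of $\La^pM$ need not be finitely generated, and a nonzero module can be simultaneously $x$-divisible and $x$-power-torsion (e.g. $\mathbb{Z}[1/2]/\mathbb{Z}$ over $\mathbb{Z}$ with $x=2$). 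With $H$ finitely generated the determinant trick would close this half — but even then your flagged boundary case $d=p-r+2$ remains, since over $\bR$ one only gets $\dpt I(\bO)\ge d-1=p-r+1$, one short of (DC).

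The paper supplies exactly the missing syzygy-type information, in a weak but sufficient form, by a different induction: \emph{descending} induction on $r$, with base cases $r=k$ and $r=p$. From (ii), $a\eta=\sum_J\o_J\w\g_J$ with $a=a_1^n$ a \nzd (Lemma \ref{lem3}); reducing mod $a$ gives the relation $0=\sum_J\bo_J\w\bg_J$; multiplying by the complementary product $\bo_{J'}$ isolates $\bO\w\bg_J=0$; then the known $r=1$ case (Saito's generalized de Rham lemma, Lemma \ref{lem2}, valid without Noetherian hypotheses by \cite{J}) applies because $\dpt I(\bO)\ge d-1>p-r$ — note this needs only a strict inequality, which is why the boundary value $d=p-r+2$ causes no trouble there — and gives $\bg_J=\sum_i\bo_i\w\bg_{J,i}$. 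Lifting and absorbing the error terms shows that $\eta-\sum_J\o_J\w\xi_J$ satisfies (B) for $r+1$, closing the induction. So the decisive external input your scheme lacks is precisely Lemma \ref{lem2} (or an honest computation of $H_{q+1}(\bar C_\bullet)$); without one of these, the long-exact-sequence induction does not close.
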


\begin{remark}\label{rem1}
{\rm Implication (A) $\Rightarrow$ (B) is trivial and holds for any $0\le p\le m$ without assuming condition (DC). Namely, multiplying both sides of (A) by $\o_I$ with $I\in \JJ(s,k)$ gives zero on the right hand side as each product $\o_I\w\o_J$, with $J\in \JJ(r,k)$, contains twice some $\o_i$ due to $r+s=k+1$. For the same reason condition (A') implies that $b\eta\w\o_I=0$ for all $I\in \JJ(s,k)$. Thus, if (A') holds for a \nzd $b\in R$ then (B) holds, too. Note that both statements are empty or trivial if $\O=0$.}
\end{remark}

\begin{remark}\label{rem-depth}\rm
For Noetherian $R$ we have $d:=\dpt I(\Om)\le m-k+1$, if $I(\O)\not=R$. Indeed, $I(\Om)$ is generated by the $k\times k$ minors of the $k\times m$ matrix of the coefficients of the forms $\o_1,\dots,\o_k$. Let $I$ be the ideal generated by the first $m-k+1$ coefficients of $\o_1$. We see that any $k\times k$ minor has at least one factor among these coefficients which implies that $I(\O)\subset I$. Thus we have $d\le\dpt I$. The conclusion follows from $\dpt I\le\gen I$, since $R$ is Noetherian, and $\gen I\le m-k+1$.
\end{remark}

For $r=1$ the first statement in the theorem reduces to the ensuing known facts.

\begin{corollary}\label{cor1}
(i) If $\dpt I(\O)\ge 2$ then any $\eta\in M$ satisfying $\O\w\eta=0$ can be represented as $\eta=\sum a_i\o_i$ with $a_i\in R$.

(ii) More generally, for any $\eta\in\La^pM$ with $p<\dpt I(\O)$ the equality $\O\w\eta=0$ implies that $\eta=\sum \o_i\w\g_i$ for some $\g_i\in\La^{p-1}M$ (answer to Question 3).

(iii) Let $k=1$ and denote $\o_1=\o$. Then for any $\eta\in\La^pM$, with $p<\dpt I(\o)$, the equality $\o\w\eta=0$ implies that  $\eta=\o\w\g$ for some $\g\in \La^{p-1}M$.
\end{corollary}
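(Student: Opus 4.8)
The plan is to derive all three statements as the special case $r=1$ of Theorem~\ref{thm1-existence}(i); since the corollary asserts only the implication (B)$\Rightarrow$(A), just the ``if'' direction of the theorem is needed. First I would set $r=1$, which by the constraint $r+s=k+1$ forces $s=k$. Then the index set $\JJ(s,k)=\JJ(k,k)$ consists of the single full multiindex $I=1\cdots k$, so $\o_I=\o_1\w\cdots\w\o_k=\O$, and condition (B) collapses from a family of equations to the single equation $\O\w\eta=0$. At the same time (A) becomes $\eta=\sum_{J\in\JJ(1,k)}\o_J\w\g_J=\sum_{j=1}^k\o_j\w\g_j$ with $\g_j\in\La^{p-1}M$, which is precisely the conclusion of part (ii).

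Next I would translate the depth hypothesis. For $r=1$ condition (DC) reads $p-1\le\dpt I(\O)-2$, equivalently $p<\dpt I(\O)$, matching the assumption in (ii). Part (i) is the further specialization $p=1$: then $\g_j\in\La^0M=R$, so setting $a_j=\g_j$ yields $\eta=\sum_j a_j\o_j$, while $p<\dpt I(\O)$ becomes $\dpt I(\O)\ge2$. Part (iii) is the specialization $k=1$: here $r=s=1$, the set $\JJ(1,1)$ is the singleton $\{1\}$ with $\o_I=\o$, so (B) reads $\o\w\eta=0$ and (A) reads $\eta=\o\w\g$ for some $\g\in\La^{p-1}M$, under the hypothesis $p<\dpt I(\o)$. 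I would also verify the admissibility constraints $1\le s\le m$ of the theorem, which hold since $s=k\le m$.

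Since this is a direct specialization, there is no substantive obstacle; the one point deserving care is verifying that the a priori family of conditions (B), indexed by $\JJ(s,k)$, really reduces to the single hypothesis stated in the corollary. This is exactly the observation that $s=k$ makes $\JJ(s,k)$ a singleton. Everything else is index and inequality bookkeeping, and no part of the argument invokes the converse implication (A)$\Rightarrow$(B).
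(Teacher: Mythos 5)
Your proposal is correct and matches the paper's own treatment: the corollary is presented there precisely as the specialization $r=1$ (hence $s=k$, so $\JJ(s,k)$ is the singleton with $\o_I=\O$) of Theorem~\ref{thm1-existence}(i), with (DC) becoming $p<\dpt I(\O)$, part (i) the further case $p=1$, and part (iii) the case $k=1$. Your index and inequality bookkeeping, including the reduction of the family (B) to the single equation $\O\w\eta=0$, is exactly right.
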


The second statement was proved in Saito \cite{S} assuming $R$ was Noetherian and in \cite{J} without this assumption. The third statement just means exactness of the Koszul complex 
\[
M\mathop{\lra}^{\po} \La^2M\mathop{\lra}^{\po}\cdots\mathop{\lra}^{\po}\La^{d-1}M\mathop{\lra}^{\po}\La^dM,
\]
where $d=\dpt I(\o)$ and $\po$ is the operator of exterior multiplication by $\o$, $\po:\a\mapsto\o\w\a$ (cf. \cite{E}). Exactness of this complex is used in many contexts but it is usually assumed that $R$ is Noetherian.

For $r=k$  Theorem \ref{thm1-existence} gives the following answer to Question 2.

\begin{corollary}\label{cor2}
Let $\eta\in \La^pM$, where $k\le p\le k+\dpt I(\O)-2$. Then $\eta=\o_1\w\cdots\w\o_k\w\g$ for some $\g\in \La^{p-k}M$ if and only if  $\eta\w\o_i=0$ for $i=1,\dots,k$.
\end{corollary}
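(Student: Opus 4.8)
The plan is to obtain this corollary as the special case $r = k$ of statement (i) of Theorem~\ref{thm1-existence}. First I would set $r = k$; the constraint $r + s = k + 1$ then forces $s = 1$. With this choice the depth condition (DC), namely $p - r \le \dpt I(\Om) - 2$, becomes $p - k \le \dpt I(\Om) - 2$, that is $p \le k + \dpt I(\Om) - 2$, which is exactly the upper bound assumed in the corollary; and the hypothesis $p \ge r$ of the theorem becomes $p \ge k$, matching the lower bound $k \le p$. Thus the two inequalities in the corollary are precisely the hypotheses of Theorem~\ref{thm1-existence}(i) specialized to $r = k$.

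Next I would identify the index sets entering conditions (A) and (B). The set $\JJ(k,k)$ consists of the single strictly monotone multiindex $J = 1\,2\cdots k$, so $\o_J = \o_1 \w \cdots \w \o_k = \Om$ and the sum in (A) collapses to the single term $\eta = \Om \w \g = \o_1 \w \cdots \w \o_k \w \g$ with $\g \in \La^{p-k}M$ --- exactly the representation claimed. Dually, $\JJ(s,k) = \JJ(1,k) = \{1, 2, \dots, k\}$, so condition (B) reads $\o_i \w \eta = 0$ for $i = 1, \dots, k$. Finally I would invoke graded commutativity, $\o_i \w \eta = (-1)^{p}\,\eta \w \o_i$, to conclude that $\o_i \w \eta = 0$ if and only if $\eta \w \o_i = 0$, reconciling (B) with the vanishing condition stated in the corollary.

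With these identifications, Theorem~\ref{thm1-existence}(i) yields precisely the asserted equivalence, so no further argument is needed; note in particular that the easy ``only if'' direction is the trivial implication (A)~$\Rightarrow$~(B) recorded in Remark~\ref{rem1}, while the substantive ``if'' direction is where the depth hypothesis is consumed. Since the corollary carries no content beyond this specialization, I do not anticipate a genuine obstacle; the only points demanding care are the bookkeeping that singles out $\JJ(k,k)$ and $\JJ(1,k)$ as a single index list and a set of singletons respectively, and the sign in comparing $\o_i \w \eta$ with $\eta \w \o_i$ --- a sign which is irrelevant to the vanishing statement but worth noting explicitly.
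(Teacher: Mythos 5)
Your proposal is correct and is exactly the paper's (implicit) argument: Corollary~\ref{cor2} is stated as the specialization $r=k$, $s=1$ of Theorem~\ref{thm1-existence}(i), and your bookkeeping --- $\JJ(k,k)$ reducing to the single index $J=12\cdots k$ so that (A) becomes $\eta=\Om\w\g$, $\JJ(1,k)$ giving (B) as $\o_i\w\eta=0$, the depth condition translating to $p\le k+\dpt I(\O)-2$, and the harmless sign $\o_i\w\eta=(-1)^p\,\eta\w\o_i$ --- matches it in full (note the paper's text cites Theorem~\ref{thm1-uniqueness} here, but that is evidently a typo for Theorem~\ref{thm1-existence}, as you correctly used).
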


If $p+s>m$ then $\o_J\w\eta\in \La^{p+s}M=0$ and condition (B) is automatically satisfied. Thus, 

\begin{corollary}\label{cor3}
If $p>m-s$ (equivalently, $p-r\ge m-k$) then $\eta$ satisfies condition (A'), i.e. (A') holds for $a^n\eta$ with any $\eta\in\La^pM$ and $a\in I(\O)$, for some $n>0$ independent of $\eta$ and $a$ .
\end{corollary}

Note that the depth condition fails in this case since $m-k\ge d-1$, by Remark \ref{rem-depth}.

\begin{example}\rm
To illustrate the above corollary take $m=5$, $k=4$, $r=2$, $s=3$, $p=3$ and $\o_1,\dots,\o_4\in M$. Then for any $\eta\in \La^3M$ there exists $n\ge 1$ such that for arbitrary $a\in I(\o_1\w\cdots\w\o_4)$ one can write $a^n\eta=\sum_{i<j}\o_i\w\o_j\w\g_{ij}$ for some $\g_{ij}\in M$.
\end{example} 

Uniqueness of the representations of $\eta$ in Theorem \ref{thm1-existence} is explained by the following result. Let $M^*$ denote the dual module to $M$. Note that $\g\in\La^qM$ can be treated as a $q$th skew symmetric form on $M^*$.  Each $\o_i$ has a well defined kernel $\ker \o_i=\{g\in M^*| g(\o_i)=0\}$ and their comon kernel will be denoted
\[
K=\ker \o_1\cap\cdots\cap \ker\o_k\subset M^*.
\]

\begin{theorem}[Uniqueness]\label{thm1-uniqueness}
Let  $\rank M\ge p\ge r\ge 1$ and $1\le k\le m$. If the ideal $I(\O)$ contains a \nzd then the following holds.

(i) If $\eta\in\La^pM$ is given in the form (A) then the elements $\g_J$, $J\in\JJ(r,k)$, are unique when restricted to $K$ which means that $\g_J(\a_1,\dots,\a_{p-r})$ are determined by $\eta$ and $\a_1,\dots,\a_{p-r}\in K$.

(ii) More generally, if $b$ is a \nzd in $R$ and $b\eta=\sum_{J\in\JJ(r,k)}\o_J\w\g_J$ then the elements $\g_J/b$ are unique, independent of $b$, when restricted to $K$. This means that $\g_J(\a_1,\dots,\a_{p-r})/b$, treated as elements of the total ring of fractions of $R$, are determined by $\eta$ and $\a_1,\dots,\a_{p-r}\in K$.
\end{theorem}

Recall that the \emph{total ring of fractions} of the ring $R$ is the localization $\fracR:=S^{-1}R$ of $R$ with respect to the multiplicative set $S$ of all \nzds in $R$. In this case the natural ring homomorphism $a\in R\mapsto a/1\in \fracR$ is injective. 
(In particular, if $R$ is an integral domain then $S^{-1}R$ is a subring of the field of fractions of $R$.) 
Any free $R$-module $M\simeq R^m$ has a similar embedding into its localization $\fracM:=S^{-1}M\simeq (\fracR)^m$. The free dual module $\fracM^*$ is naturally isomorphic to the localization $S^{-1}M^*$ of $M^*$. Finally, the inclusion $K\subset M^*$ extends to a unique inclusion of the localizations $\fracK:=S^{-1}K\subset S^{-1}M^*$. This implies the following

\begin{remark}\label{rem-ring of fractions}\rm 
The forms $\g_J/b$ can be treated as elements of $S^{-1}(\La^{p-r}M)\simeq \La^{p-r}\fracM$. We can rephrase assertion (ii) of the theorem as uniqueness of the $(p-r)$-forms $\g_J/b$ restricted to $\fracK$.
\end{remark}

The theorem immediately follows from the following more general lemma. Let $b'$ and $b''$ be \nzds in $R$ and let $S_{[b',b'']}:=\{(b')^m (b'')^n\}_{m,n\ge 0}$ be the multiplicative set of powers of $b'$ and $b''$. 

\begin{lemma}
If
\[
b'\eta=\sum_{J\in\JJ(r,k)}\o_J\w\g'_J \quad \text{and} \quad b''\eta=\sum_{J\in\JJ(r,k)}\o_J\w\g''_J,  
\]
then for all $\a_1,\dots,\a_{p-r}\in K$ we have $\g'_J(\a_1,\dots,\a_{p-r})/b'=\g''_J(\a_1,\dots,\a_{p-r})/b''$ in the localization $S^{-1}_{[b',b'']}R$ of $R$.
\end{lemma}
 
\begin{proof}
Multiplying both sides of the former equality by $b''$ and of the latter equality by $b'$ and subtracting one from the other we get
\[
0=\sum_{J\in\JJ(r,k)}\o_J\w\g_J,\ \ \text{where}\ \ \g_J=b''\g'_J-b'\g''_J. \eqno{(U)}
\]
We will prove that $\g_J\vert_K=0$. Given $\a\in M^*$, we will use the operator of interior product $\a\rfloor:\La^qM\to \La^{q-1}M$ defined by $\a\rfloor\g=\g(\a,\cdot,\dots,\cdot)$, where $\g$ is treated as $q$th skew-symmetric form on $M^*$. 
 
Consider arbitrary elements $\a_1,\dots,\a_{p-r}\in K$. Since $\a_i\rfloor \o_j=0$, we have
\[
\a_{p-r}\rfloor\cdots\a_1\rfloor(\o_J\w\g_J)=(-1)^{r(p-r)}g_J\,\o_J
\]
where $g_J=\g_J(\a_1,\dots,\a_{p-r})$ belongs to $R$. Applying the operator $\a_{p-r}\rfloor\cdots\a_1\rfloor$ to both sides of (U) gives then
\[
\sum_{J\in\JJ(r,k)}g_J\o_J=0.
\]
We claim that this equality and the assumption that $I(\O)$ contains a \nzd imply that $g_J=0$ for all $J$. To see this let us fix $J\in\JJ(r,k)$ and multiply both sides in (U) by $\o_{J'}$, where $J'\in \JJ(k-r,k)$ is a complement of $J$ in $\JJ(k,k)$ so that $J\cap J'=\emptyset$ and $\o_J\w\o_{J'}=\pm\O$. We obtain $g_J\,\O=0$ since $\o_{J'}\w\o_{\tilde J}=0$ for all $\tilde J\in \JJ(r,k)$ such that $\tilde J\not=J$ (as at least one $\o_i$ is repeated in $\o_{J'}$ and $\o_{\tilde J}$). As $I(\O)$ contains a \nzd the equality $g_J\O=0$ implies that $g_J=0$. We conclude that $\g_J(\a_1,\dots,\a_{p-r})=0$ for any $\a_1,\dots,\a_{p-r}\in K$. This equality and the definition of $\g_J$ give that $b''\g'_J(\a_1,\dots,\a_{p-r}) =b'\g''_J(\a_1,\dots,\a_{p-r})$. Interpreting the latter equality in the localization ring $S^{-1}_{[b',b'']}R$ implies the result.
\end{proof}

Theorem \ref{thm1-existence} can be restated as a property of a free submodule $N\subset M$ of the free module $M$. Assume that $N$ has finite rank $k$ and let $\o_1,\dots,\o_k\in N$ be a basis in $N$.  As before, denote $\O=\o_1\w\cdots\w\o_k$ and let $I(N)$ be the ideal generated by the coefficients of $\O$ once a basis in $M$ and the corresponding basis in $\La^kM$ are chosen. Clearly, $I(N)$ is independent of the choice of the bases in $N$ and $M$. Let $''\owedge''$ denote the skew-tensor product of modules. 

\begin{theorem}\label{thm2}
Let  $1\le p,r,s\le m$ be fixed integers such that $p\ge r$ and $r+s=k+1$.  Consider an element  $\eta\in \La^p M$.

(i) Assuming that
\[
p-r\le \dpt I(N)-2  \eqno{(DC)}
\]
we have
\[
\eta\in \La^rN\owedge\La^{p-r}M \eqno{(A)}
\]
if and only if
\[
\La^sN\w\eta=0. \eqno{(B)}
\] 

(ii) 
Without assuming (DC), if $\eta$ satisfies (B) then there exists $n>0$ such that for any $a\in I(N)$, 
\[
a^n\eta \in \La^rN\owedge\La^{p-r}M. \eqno{(A')}
\]
Here the exponent $n$ is independent of $\eta$ if the module of $\eta$ satisfying (B) is finitely generated.
Conversely, if (A') holds for some \nzd $a\in R$ then (B) holds, too.
\end{theorem}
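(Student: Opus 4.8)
The statement is a reformulation of Theorem \ref{thm1-existence}, so the plan is to build a dictionary between the two settings and then quote that theorem directly. First I would fix the basis $\o_1,\dots,\o_k$ of $N$ provided in the hypotheses and recall the standard structural fact that, for a free module $N$ of rank $k$ carrying this basis, each exterior power $\La^qN$ is free with basis $\{\o_J:J\in\JJ(q,k)\}$. Consequently the map $\La^qN\to\La^qM$ induced by the inclusion $N\hookrightarrow M$ has image spanned over $R$ by the elements $\o_J$, $J\in\JJ(q,k)$. I would also observe that $\O=\o_1\w\cdots\w\o_k$ is the same element of $\La^kM$ here as in Theorem \ref{thm1-existence}, so that the ideal $I(N)$ coincides with the ideal $I(\O)$ of that theorem; in particular the two depth conditions (DC) are identical, as are the numerical constraints $p\ge r$ and $r+s=k+1$.

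Next I would translate the three conditions. Reading $\La^rN\owedge\La^{p-r}M$ as the submodule of $\La^pM$ consisting of finite sums $\sum_i\a_i\w\b_i$ with $\a_i\in\La^rN$ and $\b_i\in\La^{p-r}M$, and expanding each $\a_i$ as an $R$-combination of the $\o_J$ with $J\in\JJ(r,k)$, I would show that membership $\eta\in\La^rN\owedge\La^{p-r}M$ is equivalent to the existence of a representation $\eta=\sum_{J\in\JJ(r,k)}\o_J\w\g_J$ with $\g_J\in\La^{p-r}M$; that is, condition (A) of Theorem \ref{thm2} is exactly condition (A) of Theorem \ref{thm1-existence}. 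In the same way, since $\La^sN$ is spanned by $\{\o_I:I\in\JJ(s,k)\}$ and the exterior product is $R$-bilinear, the vanishing $\La^sN\w\eta=0$ is equivalent to $\o_I\w\eta=0$ for every $I\in\JJ(s,k)$, which is condition (B) of Theorem \ref{thm1-existence}. The displays (A$'$) in the two theorems agree verbatim.

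With this dictionary in hand the proof becomes immediate: part (i) is the application of Theorem \ref{thm1-existence}(i) to the basis $\o_1,\dots,\o_k$ under the common hypothesis (DC), and part (ii) is the application of Theorem \ref{thm1-existence}(ii), the converse implication (A$'$) $\Rightarrow$ (B) for a \nzd $a$ being precisely the content already recorded in Remark \ref{rem1}. I expect no genuine obstacle here, since all the mathematics lives in Theorem \ref{thm1-existence}; the only point deserving a word of care is the meaning of the symbol $\owedge$ together with the fact that I work with the \emph{image} of $\La^rN$ in $\La^pM$ rather than with $\La^rN$ abstractly. Because $N$ is only singularly embedded, the map $\La^rN\to\La^rM$ need not be injective, but this is harmless: condition (A) asks only for the existence of a representation, and the sole property used is that the $\o_J$ span the relevant image. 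I would flag this explicitly so that the reader sees why non-injectivity causes no loss.
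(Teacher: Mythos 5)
Your proof is correct and takes essentially the same route as the paper's: the paper likewise fixes a basis $\o_1,\dots,\o_k$ of $N$, observes that each $\La^qN$ is spanned by the elements $\o_J$, $J\in\JJ(q,k)$, so that conditions (A), (A$'$), and (B) of Theorem~\ref{thm2} translate verbatim into the corresponding conditions of Theorem~\ref{thm1-existence}, and concludes by quoting that theorem. Your explicit remark that the map $\La^rN\to\La^rM$ need not be injective but that only spanning of its image is used is a careful touch the paper leaves implicit.
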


\begin{remark}\rm
As in Theorem \ref{thm1-existence}, if the module of $\eta$ satisfying (B) is finitely generated then the exponent $n$ in statement (ii) can bo chosen independent of $\eta$ (e.g., it is so if $R$ is Noetherian or condition (B) is trivial, i.e., $p+s>m$).
\end{remark}

Note that if for $\o_1,\dots,\o_k\in M$ the ideal $I(\O)$ contains a \nzd then the submodule $N=\spn_R\{\o_1,\dots,\o_k\}$ of $M$
is free and $\o_1,\dots,\o_k$ is its basis. (If $\sum_ia_i\o_i=0$ then multiplying both sides by $\o_1\w\cdots\hat \o_j\cdots\w\o_k$, with $\o_j$ omitted, we get $a_j\O=0$ and, as there is a \nzd in $I(\O)$, we get $a_j=0$, for all $j$.)

\begin{proposition}
Theorem \ref{thm1-existence} implies Theorem \ref{thm2}. Conversely,  Theorem  \ref{thm2} implies Theorem \ref{thm1-existence} if in the latter we assume in statement (ii) that $I(\O)$ contains a non-zero-divisor. 
\end{proposition}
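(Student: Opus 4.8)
The plan is to observe that, as soon as $N=\spn_R\{\o_1,\dots,\o_k\}$ is free with basis $\o_1,\dots,\o_k$, the two theorems are one and the same statement written in two notations; the only genuine work is to verify this translation and, for the converse, to secure the freeness of $N$.

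First I would set up the dictionary. If $N$ is free of rank $k$ with basis $\o_1,\dots,\o_k$, then $\{\o_J:J\in\JJ(r,k)\}$ is a basis of $\La^r N$ and $\{\o_I:I\in\JJ(s,k)\}$ is a basis of $\La^s N$. Three consequences follow. (a) By definition $\La^r N\owedge\La^{p-r}M$ is generated over $R$ by the products $\a\w\g$ with $\a\in\La^r N$, $\g\in\La^{p-r}M$; expanding $\a$ in the basis $\{\o_J\}$ shows that this module is exactly $\{\sum_{J\in\JJ(r,k)}\o_J\w\g_J:\g_J\in\La^{p-r}M\}$, so membership (A) of Theorem \ref{thm2} coincides with representation (A) of Theorem \ref{thm1-existence}. (b) $\La^s N\w\eta=0$ means $\b\w\eta=0$ for every $\b\in\La^s N$; since the $\o_I$ span $\La^s N$, this is equivalent to $\o_I\w\eta=0$ for all $I\in\JJ(s,k)$, i.e.\ (B) of Theorem \ref{thm2} is (B) of Theorem \ref{thm1-existence}. (c) Because $I(N)=I(\O)$ (the ideal is basis-independent), the two depth conditions (DC) coincide.

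With the dictionary in hand I would dispatch the forward implication at once: given the data of Theorem \ref{thm2}, apply Theorem \ref{thm1-existence} to the basis $\o_1,\dots,\o_k$ of $N$; items (a)--(c) turn statements (i) and (ii) verbatim into the corresponding statements of Theorem \ref{thm2}, the converse half of (ii) being unaffected. For the reverse implication the $\o_i$ are now an arbitrary tuple in $M$, so before reading the dictionary backwards I must guarantee that $N$ is free with $\o_1,\dots,\o_k$ a basis. This is exactly the observation following Theorem \ref{thm2}, which requires that $I(\O)$ contain a \nzd element. In case (i) this comes for free: the condition (DC) forces $\dpt I(\O)\ge p-r+2\ge 2$, and depth at least one already provides a regular sequence of length one, i.e.\ a \nzd in $I(\O)$; hence $N$ is free and Theorem \ref{thm2}(i) translates back into Theorem \ref{thm1-existence}(i) with no extra hypothesis.

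The one point that is not purely formal, and the reason statement (ii) must be weakened, is precisely this freeness gap. Statement (ii) carries no depth hypothesis, so for an arbitrary tuple $\o_1,\dots,\o_k$ the module $N$ need not be free and $\{\o_J\}$ need not be a basis of $\La^r N$, which is exactly what makes the translation of (a)--(c) legitimate. Adding the assumption that $I(\O)$ contains a \nzd restores freeness via the observation following Theorem \ref{thm2}; then $I(N)=I(\O)$, Theorem \ref{thm2}(ii) translates back to (A$'$) for every $a\in I(\O)$, and the converse implication is supplied by Remark \ref{rem1}. I expect no computational obstacle anywhere; the only care needed is to confirm that $\{\o_J\}$ is genuinely a basis of $\La^r N$ (so that the coefficients $\g_J$ are read off unambiguously) and to track that the non-zero-divisor required for freeness is delivered automatically in case (i) but must be postulated in case (ii).
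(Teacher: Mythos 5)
Your proposal is correct and follows essentially the same route as the paper's proof: the forward direction is the basis-to-span translation of conditions (A), (A$'$), (B), and the converse hinges on freeness of $N=\spn_R\{\o_1,\dots,\o_k\}$ via a \nzd in $I(\O)$, supplied automatically by (DC) in case (i) and postulated in case (ii). Your explicit identification of why statement (ii) must be weakened matches the paper's reasoning exactly.
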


\begin{proof}
To prove that Theorem \ref{thm1-existence} implies Theorem \ref{thm2} note that the assumption that $N$ is free of rank $k$ means that it has a basis $\o_1,\dots,\o_k\in M$. Any two such bases are related by an invertible $k\times k$ matrix over $R$. Consequently, whatever basis $\o_1,\dots,\o_k$ of $N$ we choose the modules $\La^qN$, $q\le k$, are spanned by the elements $\o_I$ with $I\in \JJ(q,k)$. Thus, conditions (A), (A'), and (B) in Theorem \ref{thm2} are equivalent to the corresponding conditions in Theorem \ref{thm1-existence}. 

To show the converse implication one should use the elements $\o_1,\dots,\o_k\in M$ in Theorem \ref{thm1-existence} to define the submodule $N\subset M$ generated by them. Such submodule $N$ is free which follows from the fact that $I(\O)$ contains a non-zero-divisor. In statement (i) of Theorem \ref{thm1-existence} the depth condition (DC) implies that $\dpt I(\O)\ge 2$ and there is a \nzd in $I(\O)$. In statement (ii) the existence of such \nzd is explicitely assumed. Once $N$ is free the statements of Theorem \ref{thm2} imply the corresponding statements in Theorem \ref{thm1-existence} since conditions (A), (A'), and (B) in both theorems are equivalent.
\end{proof}

\section{Residua}

The results from the preceding section may get a new meaning if the ring $R$ and the free $R$-module $M$ are replaced by their quotients. One such meaning is exploited in Sections 4 and 5.
Here we only mention that if $R$ is a ring of functions or function germs on a manifold and $I\subset R$ is a suitable ideal, the quotient $R/I$ represents the ring of functions or function germs on the set of zeros of $I$. In this case the forms $\g_J$ in Theorem  \ref{thm1-existence}, unique in the sense of Theorem \ref{thm1-uniqueness}, become unique residues of the original form $\eta$ on the set of zeros of $I$. 

As a preparation for the next two sections we restate main results from Section 3 for the ring $R$ replaced by its quotient by a finitely generated ideal.  
For precision, given a proper ideal $\II\subset R$ we introduce the quotient ring and the quotient module
\[
\bR=R/\II, \quad \bM=M/\II M.
\]
The $\bR$-module $\bM$ is again free of rank $m$ having a basis $\{\be_i\}$ induced by the basis $\{e_i\}$ in $M$. There are canonical homomorphisms $R\to\bR$, $M\to\bM$ and the image of an element of $R$ or $M$ will be marked with a bar, in particular $\o_i\mapsto \bo_i=\o_i+\II M$.  As the exterior product in $\La M$ induces an exterior product in $\La\bM$, again denoted by $''\w''$, we can introduce the product of all $\bo_i$
\[
\bO=\bo_1\w\cdots\w\bo_k
\]
which is an element of $\La^k\bM$. We denote by $I(\bO)$ the ideal in $\bR$ generated by the coefficients of $\bO$ written in the natural basis induced in $\La^k\bM$ by the basis $\be_1,\dots,\be_m$. 

Theorems \ref{thm1-existence} and  \ref{thm1-uniqueness}, together,  can be restated for the quotient ring $\bR$ and the quotient modules $\bM$ and $\La^q\bM\simeq\La^qM/\II\La^qM$ in the following form. Below, in conditions (A), (B) and (A'), we write elements of the quotient modules as equivalent classes in the original modules and assume that the ideal $\II$ is generated by elements $f_1,\dots,f_\ell\in R$,
\[
\II=f_1R+\cdots +f_\ell R.
\]

\begin{theorem}\label{thm-residua}
Fix a sequence $\o_1,\dots,\o_k\in M$ and integers $1\le p,r,s\le m$ such that $p\ge r$ and $r+s=k+1$.  

(i) Assume  that
\[
p-r\le \dpt I(\bO)-2.  \eqno{(DC)}
\]
Then an element  $\eta\in \La^p M$ can be represented in the form
\[
\eta=\sum_{J\in \JJ(r,k)}\o_J\w\g_J+\sum_i f_i\xi_i, \eqno{(A)}
\]
for some $\g_J\in \La^{p-r}M$ and $\xi_i\in\La^pM$, if and only if for all $I\in \JJ(s,k)$
\[
\o_I\w\eta=\sum_i f_i\b_{I,i} \eqno{(B)}
\]
for some $\b_{I,i}\in\La^{p+s}M$. 

(ii) Without assuming the depth condition (DC), for any $\eta\in \La^pM$ satisfying (B) there exists $n>0$ such that for any $b$ of the form $b=a^n$,  $a\in I(\Omega)$, we have
\[
b\eta=\sum_{J\in\JJ(r,k)} \o_J\w\g_J+\sum_i f_i\xi_i,  \eqno{(A')}
\]
for some $\g_J\in \La^{p-r}M$ and $\xi_i\in\La^pM$. Conversely, if (A') holds for some $b\in R$ such that its equivalence class $\bar b$ is a \nzd in $\bR$ then (B) holds, too.

(iii)  The elements $\g_J$ in the formula (A) are unique, modulo $\sum_i f_i\La^{p-r}M$, when treated as skew-symmetric $(p-r)$-linear forms on $M^*$ restricted to $K=\bigcap_i\ker \o_i\subset M^*$.  

(iv) More generally, if $b$ in the representation (A')  is an arbitrary element of $R$ such that the equivalence class $\bar b$ is a \nzd in $\bR$, then the elements $\bar\g_J/\bar b$ are uniquely determined by $\eta$ (independently of $b$) when treated as skew-symmetric $(p-r)$-linear forms on $\bM_{fr}^*$ restricted to $\bK_{fr}:=\bigcap_i\ker \bar\o_i\subset\bM_{fr}^*$, where $\bar\o_i\in \bM\subset \bM_{fr}$ are treated as elements of $\bM_{fr}$. Here  $\bM_{fr}=\bS^{-1}\bM$ denotes the localization of $\bM$ with respect to the multiplicative set $\bS$ of \nzds in $\bR$ and $\bM_{fr}^*$ is its dual module over $\bR_{fr}=\bS^{-1}\bR$ (cf. notation in Remark \ref{rem-ring of fractions}).  
\end{theorem}

\begin{proof} 
Statements (i) and (ii) in the theorem can be deduced from the corresponding statements in Theorem \ref{thm1-existence} where the ring $R$ should be replaced by the quotient ring $\bR=R/\II$ and the modules $M$ and $\La^qM$ by the quotient modules $\bM=M/\II M$ and $\La^q\bM\simeq\La^qM/\II\La^qM$. Namely, one easily verifies that conditions (A), (A') and (B) in Theorem \ref{thm1-existence}, stated for the quotient modules, are equivalent to the corresponding conditions in Theorem \ref{thm-residua}. Thus the first two statements in Theorem \ref{thm-residua} are direct consequences of Theorem  \ref{thm1-existence}.

Statements (iii) and (iv) follow analogously from Theorem \ref{thm1-uniqueness} where the original modules should be replaced with the quotient ones. 
\end{proof}

The unique elements $\Res_J(\eta):=(\bar\g_J/\bar b)\vert_{\overline{K}_{fr}}$ can be called \emph{algebraic residua} of $\eta$ with respect to $\o_1,\dots,\o_k$ and $\II\subset R$.

Consider the ideal  $I(f_1,\dots,f_\ell,\O):=\spn_R\{\II,I(\O)\}$ in $R$ generated by the generators $f_1,\dots,f_\ell$ of $\II$ and the coefficients of the $k$-form $\O$.
The depth of the ideal $I(\bO)$ in Theorem \ref{thm-residua}  can be computed from the ensuing relation.

\begin{proposition}\label{prop1}
If $R$ is Noetherian and the sequence $f_1,\dots,f_\ell$ is regular then $\dpt\II=\ell$  and
\[
\dpt I(f_1,\dots,f_\ell,\O)=\dpt I(\bO)+\ell.
\]
\end{proposition}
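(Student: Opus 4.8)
The goal is to prove Proposition~\ref{prop1}, namely that under the hypotheses that $R$ is Noetherian and $f_1,\dots,f_\ell$ is a regular sequence, one has $\dpt\II=\ell$ and
\[
\dpt I(f_1,\dots,f_\ell,\O)=\dpt I(\bO)+\ell.
\]
The plan is to reduce everything to the behaviour of depth under passage to the quotient $\bR=R/\II$, exploiting the dictionary between regular sequences in $R$ and in $\bR$. First I would dispose of the equality $\dpt\II=\ell$: since $f_1,\dots,f_\ell$ is a regular sequence contained in $\II$, we immediately have $\dpt\II\ge\ell$; for the reverse inequality I would invoke the standard fact (recalled presumably in the Appendix) that in a Noetherian ring the depth of an ideal $I$ equals $\hgt I$ when $I$ is generated by a regular sequence, or more directly that a regular sequence of length $\ell$ generating $\II$ cannot be extended, so $\dpt\II=\ell$. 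In fact the cleaner route is to observe that $\dpt\II\le\gen\II\le\ell$ together with $\dpt\II\ge\ell$.

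The heart of the matter is the second equality, and the key step is the following depth-shifting principle. Let $J=I(f_1,\dots,f_\ell,\O)=\spn_R\{\II,I(\O)\}$. I would first establish that a sequence extends a given regular sequence $f_1,\dots,f_\ell$ to a longer regular sequence $f_1,\dots,f_\ell,g_1,\dots,g_t$ in $J$ if and only if the images $\bar g_1,\dots,\bar g_t$ form a regular sequence in $\bR$ lying in the ideal $\bar J:=J\bR=(J+\II)/\II$. This is the standard correspondence: $g_1,\dots,g_t$ is regular on $R/\II$ precisely when $f_1,\dots,f_\ell,g_1,\dots,g_t$ is regular on $R$, because quotienting by the regular sequence $f_1,\dots,f_\ell$ first is exactly what the definition of regularity on the successive quotients records. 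Consequently
\[
\dpt_R J=\ell+\dpt_{\bR}\,\bar J,
\]
where I have used that every maximal regular sequence in $J$ can be taken to begin with $f_1,\dots,f_\ell$ (here Noetherianness guarantees that $J$ itself contains $f_1,\dots,f_\ell$ as the start of a maximal regular sequence, and that all maximal regular sequences in a given ideal have the same length, so the count is unambiguous).

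What remains is to identify $\bar J$ with $I(\bO)$ as ideals of $\bR$, so that $\dpt_{\bR}\bar J=\dpt I(\bO)$. Here I would argue that $\bar J=J\bR$ is generated by the images of the generators of $J$, namely the images of $f_1,\dots,f_\ell$ (which vanish in $\bR$) together with the images of the coefficients $a_I$ of $\O$; and the images $\bar a_I$ of those coefficients are precisely the coefficients of $\bO=\bo_1\w\cdots\w\bo_k$ written in the induced basis $\be_1,\dots,\be_m$, since reduction modulo $\II M$ is a ring-and-module homomorphism compatible with the exterior product. Thus $\bar J=I(\bO)$, and combining with the displayed depth-shifting identity and $\dpt\II=\ell$ yields the claim.

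The main obstacle I anticipate is justifying the depth-shifting identity $\dpt_R J=\ell+\dpt_{\bR}\bar J$ rigorously rather than merely plausibly. The subtle point is that the maximal regular sequence realizing $\dpt_R J$ need not a priori begin with the chosen $f_1,\dots,f_\ell$; one must argue that because $f_1,\dots,f_\ell$ is already a regular sequence in $J$, it can be completed to a maximal one inside $J$, and then invoke the invariance of the common length of maximal regular sequences in a fixed ideal of a Noetherian ring (equivalently, the characterization of depth via the nonvanishing of $\operatorname{Ext}$ modules). This invariance is exactly where Noetherianness is essential, and it is the one place where I would lean on the Appendix's catalogue of properties of depth rather than reprove it. The remaining identifications are formal and follow from the functoriality of the exterior algebra under the quotient map.
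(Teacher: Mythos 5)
Your proposal is correct and takes essentially the same route as the paper: both establish $\dpt\II=\ell$ from $\ell\le\dpt\II\le\gen\II\le\ell$ (Property 9), and both prove the displayed equality by completing $f_1,\dots,f_\ell$ to a maximal regular sequence in $I(f_1,\dots,f_\ell,\O)$ of length $\dpt I$ (Property 7) together with the standard correspondence between regular sequences in $\bR$ extending nothing and regular sequences in $R$ extending $f_1,\dots,f_\ell$. The only difference is that you spell out the identification of the image ideal $I(f_1,\dots,f_\ell,\O)\bR$ with $I(\bO)$ via compatibility of reduction modulo $\II$ with the exterior product, a step the paper leaves implicit.
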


\begin{proof} 
The proof is an exercise on regular sequences and depth. The equality $\dpt\II=\ell$ is a consequence of regularity of the sequence $f_1,\dots,f_\ell$ and the inequality $\dpt I \le \gen I$ true for any proper ideal in Noetherian rings  (Property 9 in Appendix).  
The displayed equality follows from the following property: if $I$ is a proper ideal of a Noetherian ring $R$ then any regular sequence in $I$ can be completed to a maximal regular sequence in $I$ of length equal to $\dpt I$ (Property 7 in Appendix). Taking $I= I(f_1,\dots,f_\ell,\O)$ and the regular sequence $f_1\cdots,f_\ell$ in $I$, we can complete it to a maximal regular sequence $f_1\cdots,f_\ell,\cdots,f_q$ with $q=\dpt I$. Then $f_{\ell+1}+\II,\dots,f_q+\II$ is a regular sequence in $I(\bO)$ and thus we have $\dpt I\le \ell+\dpt I(\bO)$. 

Vice versa, let $\bar d =\dpt I(\bO)$ and consider a regular sequence $f_{\ell+1}+\II,\dots,f_{\ell+\bar d}+\II$ in $I(\bO)$ with $f_i\in I(\O)$. Then the sequence $f_1,\dots,f_\ell,\dots,f_{\bar d}$ of elements of $I$ is regular and we get the converse inequality $\dpt I\ge \ell+\dpt I(\bO)$.
\end{proof}

We separately state the special case of $r=k$ in Theorem \ref{thm-residua} which will have direct consequences related to logarithmic residua discussed in the next section.  

\begin{corollary}\label{cor-geometric residua}
(i) Assume that $p\ge k$ and
\[
p\le \dpt I(\bO)+k-2.  \eqno{(DC)}
\]
Then an element  $\eta\in \La^p M$ can be represented in the form
\[
\eta=\o_1\w\cdots\w\o_k\w\g+\sum_j f_j\xi_j, \eqno{(A)}
\]
for some $\g\in \La^{p-k}M$ and $\xi_j\in\La^pM$, if and only if
\[
\o_i\w\eta=\sum_j f_j\b_{i,j} \ \ \forall\  i\in\{1,\dots,k\} \eqno{(B)}
\]
for some $\b_{i,j}\in\La^{p+1}M$. 

(ii) If $p\ge k$ and $\eta\in \La^pM$ satisfies condition (B) then, without assuming (DC), there exists $n>0$ such that for any $b=a^n$, with $a\in I(\Omega)$, we have 
\[
b\eta=\o_1\w\cdots\w\o_k\w\g+\sum_j f_j\xi_j, \eqno{(A')}
\]
for some $\g\in \La^{p-k}M$ and $\xi_j\in\La^pM$. Conversely, if (A') holds with $b\in R$ such that its equivalence class $\bar b$ is a \nzd in $\bR$ then (B) holds, too.

(iii) If $b$ in formula (A') is an arbitrary element of $R$ such that its equivalence class $\bar b$ is a \nzd in $\bR$, then the $(p-k)$-form $\g$ on $M^*$ is unique, modulo $\sum_j f_j\La^{p-k}M$, when restricted to $K=\bigcap_i\ker \o_i\subset M^*$.  Moreover, in terms of the total ring of fractions $\bR_{fr}$ of the ring $\bR=R/\II$, and the corresponding module $\bM_{fr}$ and its dual $\bM_{fr}^*$,  the $(p-r)$-form $\bar \g/\bar b$ on $\bM_{fr}^*$ is independent of the choice of $b$, when restricted to $\bK_{fr}=\bigcap_i\ker \bar\o_i\subset \bM_{fr}^*$.
\end{corollary}

\begin{example}\label{rem-residua}{\rm
Let $R=\OO_0(\C^n)$ be the ring of holomorphic function germs at $0\in\C^n$ and let $f_1,\dots,f_\ell$ be its elements. Consider elements $\o_1,\dots,\o_k\in M=R^m$ which can be treated as germs at $0$ of  holomorphic maps $\C^n\to \C^m$. More geometrically, if $\C^n$ is replaced by a complex manifold $X$ of dimension $n$ and $\C^n\times\C^m$ is replaced by a holomorphic vector bundle $E\to X$ with fiber $F\simeq \C^m$ then we can think of $R$ as the ring of holomorphic function germs at a point $x\in X$ and of $\o_i$ as germs at $x$ of holomorphic sections of $E$. Then $M$ is isomorphic to the module of germs at $x$ of holomorphic sections of the bundle $E$ and $\La^qM$ is isomorphic to the $R$-module of germs at $x$ of holomorphic sections of the bundle $\La^qE$ which is the $q$th exterior product of the bundle $E$. 
In this case the second term on the right hand side of (A) and (A') vanishes at points of the set $Z=\{f_1=\cdots=f_\ell=0\}$  of common zeros of $f_j$. At such points the algebraic residue  $\g\vert_K+\sum_jf_j\La^{p-k}M\vert_K$ has the geometric interpretation of the field of the skew-symmetric $(p-k)$-forms $\g(x)\in \La^{p-k}E_x$ which are uniquely defined on the common kernel $K_x=\cap_j\ker\o_j(x)\subset E^*_x$ at points in $Z$ where $\o_1(x),\dots,\o_k(x)$ are linearly independent. A detailed discussion of the case where $\ell=k$, $E$ is the holomorphic cotangent bundle of a complex manifold $N$ and $\o_j=df_j$ will be given in Section 5.}
\end{example}

\section{Logarithmic residua}

In algebraic and complex analytic geometry a special role is played by the notion of logarithmic residue. Generally, this is a differential form defined on the divisor (pole) of a meromorphic form. Here we define meromorphic forms and their residua in a purely algebraic way as a special case of the algebraic residue from the preceding section. One of our motivations is to make it sufficienly general so that it is applicable in certain problems in differential geometry where appearence of singularities is natural. 

As earlier, $R$ will denote a unital commutative ring. Recall that a derivation on $R$ is a map $X:R\to R$ satisfying 
\[
X(f+g)=X(f)+X(g),
\]
\[
\qquad\ X(fg)=X(f)g+fX(g),
\] 
for all $f, g\in R$. The set of all derivations of $R$ is an $R$-module, denoted $\Der(R)$. 

Let $M=\Der(R)$ and assume that it is a free module of finite rank $m$. 
(This holds e.g. when $R$ is the ring of $C^r$  function germs on $m$-manifold of class $C^r$, where $r\in\{\infty,\omega,\text{holomorphic}\}$, in which case $M=\Der(R)$ is the modules of vector field germs of class $C^r$.)  Any element $f\in R$ defines a homomorphism $df:M\to R$ defined by
\[
df(X)=X(f)
\]
and called here \emph{algebraic differential} of $f$. 

Before stating the main theorem in this section we reformulate Corollary \ref{cor-geometric residua} to a setting which has immediate differential-geometric interpretations when $R$ is the ring of function germs of class $C^r$ as mentioned above. Let us fix a sequence of elements $f_1,\dots,f_k\in R$ generating a proper ideal $\II\subset R$. Consider the corresponding homomorphisms 
\[
df_1,\dots,df_k:M\to R
\]
which are elements of the dual free module, $df_i\in M^*=\Der(R)^*$.  We replace the module $M$ in Corollary \ref{cor-geometric residua} by its dual $M^*$ and take \[
\o_1=df_1,\ \dots\ ,\o_k=df_k.
\]
Consider the quotient ring $\bR=R/\II$ and the quotient $\bR$-module
\[
\bM^*=M^*/f_1M^*+\cdots+f_k M^*
\]
which is again free and has rank $m$. Let $\overline{df}_i=df_i+\II M^*\in \bM^*$ denote  the quotient counterparts (equivalence classes)  of the elements $df_i$.  Denote
\[
\bO=\overline{df}_1\w\cdots\w \overline{df}_k
\]
which is an element of $\La^k\bM^*$. Let $I(\bO)$ denote the ideal in $\bR$ generated by the coefficients of $\bO$ written in the basis $\be_{i_1}\w\cdots\w\be_{i_k}$, $i_1<\cdots<i_k$, where $\be_1,\dots,\be_m$ is a basis in $M^*$.

\begin{corollary}\label{cor-logarithmic residua}
Assume the set $M=\Der(R)$ of derivations of $R$ is a free module of rank $m$ and $f_1,\dots,f_k\in R$ are fixed elements generating a proper ideal in $R$. 

(i) Let $p\ge k$ and assume that 
\[
p\le \dpt I(\bO)+k-2.  \eqno{(DC)}
\]
Then an element  $\eta\in \La^p M^*$ can be represented in the form
\[
\eta=df_1\w\cdots\w df_k\w\g+\sum_j f_j\xi_j, \eqno{(A)}
\]
for some $\g\in \La^{p-k}M^*$ and $\xi_j\in\La^pM^*$, if and only if for all $i\in\{1,\dots,k\}$
\[
df_i\w\eta=\sum_j f_j\b_{i,j} \eqno{(B)}
\]
for some $\b_{i,j}\in\La^{p+1}M^*$. 

(ii) Without assuming the depth condition (DC), given any $\eta\in \La^pM^*$ satisfying (B) there exists $n>0$ such that for arbitrary $b=a^n$ with $a\in I(\Omega)$ we have
\[
b\eta=df_1\w\cdots\w df_k\w\g+\sum_j f_j\xi_j \eqno{(A')}
\]
for some $\g\in \La^{p-k}M^*$ and $\xi_j\in\La^pM^*$. Conversely, if (A') holds for some $b\in R$ such that $\bar b$ is a \nzd in $\bR$ then (B) holds, too.

(iii) If, in the representation (A'), $\bar b$ is a \nzd in $\bR$ or $b=1$ as in (A), then the skew-symmetric $(p-k)$-linear form $\g$ on $M$ is unique, modulo $\sum_j f_j\La^{p-k}M^*$, when restricted to $K=\bigcap_i\ker df_i\subset M$  (it depends on $\eta$ and $b\in R$).
\end{corollary}

The corollary is a direct consequence of Corollary \ref{cor-geometric residua}. The reader may formulate an analogous corollary to Theorem \ref{thm-residua}.
The element 
\[
\bg\vert_K:=\g\vert_K+\sum_j f_j\La^{p-k}M^*\vert_K,
\]
with $\g$ appearing in (A), can be called \emph{logarithmic residue} of $\eta$ with respect to $f_1,\dots,f_k\in R$, consistently with the next theorem. Note that if there is a repetition, up to a unit, in the sequence of $f_1,\dots,f_k$, for example $f_i=gf_j$ for a pair $i\not=j$, then $df_i=gdf_j+f_jdg$ and $\bO=0$. In this case statement (i) is empty, as condition (DC) fails, and statement (ii) is trivial as $a=0$.

In order to state the main result on logarithmic residua of meromorpic foms we clarify the terminology.
As earlier (cf. Remark \ref{rem-ring of fractions}), we embed the ring $R$ into its total ring of fractions $R_{fr}$ and, analogously, the modules $M$ and $M^*$ are canonically embedded into their localizations $M_{fr}$ and $M_{fr}^*$ over the multiplicative set $S$ of \nzds in $R$. An element $\o=\eta/c\in \La^q\fracM^*$, $\eta\in \La^q M^*$, $c\in S$, will be called \emph{meromorphic form} with \emph{divisor} $c$, if $c$ is not a unit and there is no other such representation of $\o$ with invertible $c$.  
The forms $\o$ which can be represented as $\o=\eta/1\in \La^q\fracM^*$ will be called \emph{regular}. Assume that the divisor $c$ can be factorized, $c=f_1\cdots f_k$, where all $f_i\in R$ are prime. Then the meromorphic form can be written as
\[
\o=\frac{\eta}{f_1\cdots f_k}. \eqno{(MF)}
\]
We will call such form of  $\o$ \emph{reduced} if there are no repeated (up to units) factors among the primes $f_i$ and $\eta$ is not divisible over any of $f_i$. The correspondig divisor $c=f_1\cdots f_k$ is then called \emph{reduced}. A meromorphic form will be called \emph{reducible} if it can be represented in the reduced form (MF). If $R$ is a unique factorization domain then every meromorphic form in $\La^q\fracM^*$ is reducible and its reduced divisor $c=f_1\cdots f_k$ is unique up to a unit (then $k$ is unique and the factors $f_1,\dots,f_k$ are unique up to order and units).   

Given the factors $f_1,\dots,f_k$ of the divisor in (MF) we can use the earlier notation with the ideal $\II=f_1R+\cdots+f_kR$, the factor ring $\bR=R/\II$, the factor modules $\bM=M/\II M$, $\bM^*=M^*/\II M^*$, and the elements $\overline{df}_i\in\bar M^*$, where  $\overline{df}_i:\bar M\to \bR$ are defined on the equivalence classes by $\overline{df}_i(v+\II M)=df_i(v)+\II$. We assume that $M=\Der(R)$ is a free module of rank $m\ge k$ and denote by $\hf_i$ the product of $f_1,\dots,f_k$ with omitted $f_i$. 

\begin{theorem}\label{thm-logarithmic residua}
Assume that $p\ge k$ and $\o\in \La^p\fracM^*$ is a reducible meromorphic form (MF) with a reduced divisor $f_1\cdots f_k$ such that its factors generate a proper ideal in $R$. Then the following holds.

(i) If $\o$ satisfies the condition
\[
df_i\w\o=0\ \mod\ \sum_j (\hat f_j)^{-1}\La^{p+1}M^*,\ \ i=1,\dots,k,
\]
then there exists $n\ge 0$ such that for $b=a^n$, where $a$ is any element of the ideal $I(df_1\w\cdots\w df_k)$ of $R$, the form $b\,\o$ can be written as
\[
b\,\o=\frac{df_1}{f_1}\w\cdots\w\frac{df_k}{f_k}\w\g+\sum_i\frac{\xi_i}{\hat f_i}  \eqno{(LR)}
\]
for some regular forms $\g\in\La^{p-k}M^*$ and $\xi_i\in\La^{p}M^*$. 

(ii) If in addition $\dpt I(\overline{df}_1\w\cdots\w \overline{df}_k)\ge p-k+2$ in the quotient ring $\bR$ then the representation (LR) holds with $b=1$.

(iii) If $R$ is a unique factorization domain and $b\,\o$ is of the form (LR) with $\bar b$ a \nzd in $\bR$, then the meromorphic $(p-k)$-form ${\bar b}^{-1}\bg\in\La^{p-k}\bM_{fr}^*$ restricted to $\bK_{fr}=\ker \overline{df}_1\cap\cdots\cap \overline{df}_k\subset\bM_{fr}$,
\[
\Res(\o):=\bar b^{-1}\bg\vert_{\bK_{fr}}, \eqno{(RES)}
\]
is, up to sign, uniquely defined by $\o$ (cf. Remark \ref{rem-unique gamma} below). 
\end{theorem}

As in complex analysis, one can call the skew-symmetric  $(p-k)$-form $\Res(\o):\bK_{fr}^{p-k}\to\bR$ defined in (RES) the \emph{logarithmic residue} of the meromorphic form $\o$ with the reduced divisor $f_1\cdots f_k$. 

Note that the logarithmic representation formula (LR) can be written in the more compact form
\[
b\,\o=d \log(f_1)\w\cdots\w d \log(f_k)\w\g  \mod\ \sum_i (\hat f_i)^{-1}\La^pM^*,  \eqno{(LR')}
\]
where we denote $d\log(f_i)=df_i/f_i$.  The divisors of the reduced versions of the forms $\xi_i/\hat f_i$ in the formulae (LR) are strictly ``weaker'' then $f_1\cdots f_k$ since $f_i$ is omitted and $\xi_i$ are regular. We can then say that the first ``logarithmic'' term in both formulae represents the most singular part of the form $b\,\o$.

{\it Proof of Theorem \ref{thm-logarithmic residua}.}
Having a meromorphic form $\o=\eta/(f_1\cdots f_k)$ we may consider its regular part $\eta$ and apply Corollary \ref{cor-logarithmic residua} to $\eta$. It is then evident that  
the first two statements of the theorem follow from statements (ii) and (i) in Corollary \ref{cor-logarithmic residua} by dividing both sides in equalities (A) and (A') by $f_1\cdots f_k$.

The uniqueness statement can be similarly deduced from statement (iii) in Corollary \ref{cor-logarithmic residua}. Since the formula (LR) implies that $\bar b^{-1}\bg$ is independent of the \nzd $\bar b$, it is enough to show that it is also independent of the choice of the reduced representation $\o=\eta/(f_1\cdots f_k)$ of $\o$. The assumptions that $R$ is a unique factorization domain and the divisor $f_1\cdots f_k$ is reduced imply that the freedom in choosing the reduced representation $\o=\eta/(f_1\cdots f_k)$ of a given $\o$ is restricted to multiplying $\eta$ and the factors $f_i$ by units $g$ and $g_i$, respectively, so that $g=g_1\cdots g_k$. Changing $f_i$ for $\tilde f_i= g_if_i$ gives that $d\tilde f_i/\tilde f_i=df_i/f_i+dg_i$ and the terms $dg_i$ eliminate the factor $f_i$ in the denominator of the first term in (LR). This means that they contribute to the second term in (LR) while the first term remains unchanged, thus $\bg$ remains unchanged.
\hfill$\Box$ 

\begin{remark}\label{rem-unique gamma}\rm 
Note that permuting the order of $f_1,\dots,f_k$ changes the sign of $\g$ in the formula (LR) by the signature of the permutation since $df_1\w\cdots\w df_k$ changes the sign in this way.
\end{remark}

\section{Multidimensional logarithmic residua in complex analysis}

One consequence of Theorem \ref{thm-logarithmic residua} from the preceding section is a result in complex geometry stated below. Let $N$ denote a complex manifold of dimension $m$ which, without loosing generality, can be taken $\C^m$. We denote by $R=\OO_x(N)$  the ring of holomorphic function germs at a point $x\in N$ (e.g. $x=0\in \C^m$).  It is well known that the ring $\OO_x(N)$ is a Noetherian unique factorization domain.  

Consider the $R$-module $\Vect_x(N)$ of germs at $x$ of holomorphic vector fields on the manifold $N$ and denote by $\La_x(N)$ the $R$-module of germs at $x$ of holomorphic one forms on $N$. Both modules are free of rank $m$ and are dual to each other by the natural pairing $\langle\o,X\rangle=\o(X)$ of $\o\in\La_xN$ and $X\in\Vect_x(N)$.  By $\La_x^q(N)$ we denote the $R$-module of germs at $x$ of holomorphic $q$-forms on $N$ which is the $q$th skew-symmetric product of  $\La_x(N)$. Germs at $x$ of considered objects can be replaced by their representatives defined in a small neighbourhood of $x$ and, if convenient, we will do this without mentioning. 
  
Fix $x\in N$ and consider function germs $f_1,\dots,f_k$ in $\OO_x(N)$ generating a proper ideal $\II\subset \OO_x(N)$. The set of its zeros
\[
Z_f=\{z\in N:\, f_1(z)=\cdots=f_k(z)=0\}
\]
is an analytic set germ at $x$. Let $df_i$ denote holomorphic differentials of $f_i$. 

Consider the holomorphic k-form germ $\O\in\La_x^k(N)$ defined pointwise as
\[
\O(z)=df_1(z)\w\cdots\w df_k(z).
\]
We will also use the set germ of common zeros of $f=(f_1,\dots,f_k)$ and $\O$,
\[
Z_{f,\O}=\{z\in N:\, f_1(z)=\cdots=f_k(z)=0,\ \O(z)=0\}.
\]

Recall that, given an open subset $U\subset N$  and an analytic subset $Z\subset U$, a point $z\in Z$ is called \emph{regular} if the intersection of $Z$ with a small neighbourhood $V\subset U$ of $z$ is a complex submanifold of $U$. Otherwise such a point is called \emph{singular}. The subset of regular points in $Z$ will be denoted $Z_{reg}$. The dimension of $Z$ at a regular point $z\in Z$ is just the dimension of the submanifold $Z\cap V$, denoted $\dim_zZ$. The dimension of $Z$ at a singular point $z\in S\subset Z$, also denoted $\dim_zZ$, is defined as the supremum of dimensions of $Z$ at regular points in $Z$ in a (arbitrarily) small neighbourhood of $z$. The codimension of $Z$ at $z\in Z$ is defined as $\codim_zZ=\dim N-\dim_zZ$.

Points in $Z$ which satisfy $\O(z)\not=0$ will be called \emph{strongly regular} and the set of such points will be denoted by $Z_{rreg}$. At points $z\in Z_{rreg}$ the differentials $df_1(z),\dots,df_k(z)$ are linearly independent. It then follows from the holomorphic version of implicit function theorem that in a neighbourhood $U$ of any such point $z\in Z_{rreg}$ the set of zeros $Z$ is a submanifold of codimension $k$.
The inclusions $Z_{rreg}\subset Z_{reg}\subset Z$ become equalities in each such neighbourhood. Moreover, we have
\[
T_zZ_{reg}=\ker df_1(z)\cap\cdots\cap\ker df_k(z), \ \ z\in Z_{rreg}. \eqno{(TS)}
\]

\begin{theorem}
\label{thm-complex logarithmic residua}
(i) Let $p\ge k$. Assume that $f_1,\dots,f_k$ is a regular sequence in $R=\OO_x(N)$ and
\[
p\le \codim_x Z_{f,\O}-2.  \eqno{(CD)}
\]
Then a holomorphic p-form germ $\eta\in \La_x^p(N)$ can be written as
\[
\eta=df_1\w\cdots\w df_k\w\g+\sum_j f_j\xi_j, \eqno{(A)}
\]
for some $\g\in \La_x^{p-k}(N)$ and $\xi_j\in\La_x^p(N)$, if and only if for all $i\in\{1,\dots,k\}$
\[
df_i\w\eta=\sum_j f_j\b_{i,j}, \ \ \text{with}\ \b_{i,j}\in\La_x^{p+1}(N).  \eqno{(B)}
\]

(ii) Without assuming the codimension condition (CD) there exists $n>0$ such that for any holomorphic p-form germ $\eta\in \La_x^p(N)$ satisfying (B) and any holomorphic function germ $b=a^n$, where $a\in I(\Omega)$, we have
\[
b\eta=df_1\w\cdots\w df_k\w\g+\sum_j f_j\xi_j, \eqno{(A')}
\]
for some $\g\in \La_x^{p-k}(N)$ and $\xi_j\in\La_x^p(N)$. Conversely, if (A') holds for some $b\in \OO_x(N)$ such that $\bar b=b+\II$ is a \nzd in the quotient ring $\OO_x(N)/\II$  then (B) holds, too.

(iii) The (p-k)-form $\g(z)$ in (A), as well as $\g(z)$ in (A') provided that $\bar b$ is a \nzd in $\bR$, is unique when restricted to the holomorphic tangent subspaces $T_zS_{reg}$ at strongly regular points $z\in Z_{rreg}$. 
\end{theorem}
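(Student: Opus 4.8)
The plan is to obtain this theorem as the holomorphic specialization of Theorem \ref{thm-logarithmic residua}, the only genuine work being the translation of the depth condition (DC) into the codimension condition (CD). First I would fix the dictionary between the algebraic and the geometric objects: the ring $R=\OO_x(N)$ is Noetherian, in fact a regular local ring of dimension $m$, hence a UFD and in particular an integral domain; its module of derivations $\Der(R)$ is exactly the module $\Vect_x(N)$ of germs of holomorphic vector fields, free of rank $m$; the dual $M^*$ is the one-form module $\La_x(N)$, so that $\La^pM^*=\La_x^p(N)$; and the algebraic differential $df(X)=X(f)$ coincides with the ordinary holomorphic differential. Under these identifications the forms, the products $df_1\w\cdots\w df_k\w\g$, and the conditions (A), (A'), (B) in the present statement are literally those appearing in Theorem \ref{thm-logarithmic residua}.

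Statements (ii) and (iii) then follow immediately. Part (ii) is the verbatim transcription of Theorem \ref{thm-logarithmic residua}(ii), with the single remark that, $\OO_x(N)$ being an integral domain, every nonzero element is a \nzd, so the converse phrased with a nonzero $a$ matches the hypothesis there. For part (iii) I would invoke Theorem \ref{thm-logarithmic residua}(iii), which gives uniqueness of $\g$ restricted to $K=\bigcap_i\ker df_i$; to convert this into the stated geometric form I use the pointwise tangent-space identity (TS): at every strongly regular point $z\in Z_{rreg}$ one has $K_z=\bigcap_i\ker df_i(z)=T_zZ_{reg}$. Hence the restriction of $\g$ to $K$, read pointwise, is exactly the restriction of $\g(z)$ to the tangent spaces of $Z$ at strongly regular points, which is the uniqueness asserted in (iii).

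The substance of (i) is to show that, under the hypotheses, (DC) and (CD) are the same inequality. Since $f_1,\dots,f_k$ is assumed regular and $R$ is Noetherian, Proposition \ref{prop1} (with $\ell=k$) yields $\dpt I(f_1,\dots,f_k,\O)=\dpt I(\bO)+k$, so the condition of Theorem \ref{thm-logarithmic residua}(i) reads $p\le \dpt I(f_1,\dots,f_k,\O)-2$. It remains to identify the depth of $J:=I(f_1,\dots,f_k,\O)$ with $\codim_xZ_{f,\O}$. Because $\OO_x(N)$ is a regular local ring it is Cohen--Macaulay, whence $\dpt J=\hgt J$ for every proper ideal; and being equidimensional and catenary of dimension $m$ it satisfies $\hgt J+\dim R/J=m$, i.e. $\hgt J=\codim_xV(J)$, where $V(J)$ is the zero-set germ and $\dim R/J=\dim_xV(J)$ by analytic dimension theory. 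Finally $V(J)$ is precisely the common zero germ of $f_1,\dots,f_k$ and of the coefficients of $\O$, namely $Z_{f,\O}$. Chaining these equalities gives $\dpt I(\bO)+k=\codim_xZ_{f,\O}$, so (DC) becomes exactly (CD) and part (i) follows from Theorem \ref{thm-logarithmic residua}(i).

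The main obstacle is the middle link of the last chain, the passage $\dpt J=\hgt J=\codim_xZ_{f,\O}$. The first equality rests on the Cohen--Macaulay property of the local ring of holomorphic germs, and the second on the Nullstellensatz for $\OO_x(N)$ together with the identification of the Krull dimension of $R/J$ with the dimension of the analytic set germ $V(J)$; both are standard but are the only places where genuine commutative algebra and analytic geometry of $\OO_x(N)$ enter, everything else being transcription. One should also dispose of the degenerate case $Z_{f,\O}=\emptyset$, where $J$ is the unit ideal, $\codim_xZ_{f,\O}$ is set to $+\infty$, and the conclusion holds vacuously for every admissible $p$.
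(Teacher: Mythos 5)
Your parts (i) and (ii) coincide with the paper's own proof: the same dictionary $R=\OO_x(N)$, $\Der(R)=\Vect_x(N)$, $M^*=\La_x(N)$, the same observation that in an integral domain "nonzero" equals "\nzd", and the same chain $\codim_xZ_{f,\O}=\dpt I(f_1,\dots,f_k,\O)=k+\dpt I(\bO)$ via Proposition \ref{prop1} with $\ell=k$. The only difference is cosmetic: where the paper cites \cite{GH} for the equality of the depth of an ideal of germs with the codimension of its zero set, you rederive it from the Cohen--Macaulay property of the regular local ring $\OO_x(N)$ ($\dpt J=\hgt J$) plus $\hgt J+\dim R/J=m$ and the Nullstellensatz. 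That is the same content, unpacked; your handling of the degenerate case $Z_{f,\O}=\emptyset$ is a reasonable extra remark.

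In part (iii), however, there is a genuine gap, and it is exactly where the paper takes a different route. Theorem \ref{thm-logarithmic residua}(iii) is a statement about the \emph{submodule} $K=\bigcap_i\ker df_i\subset\Der(R)$ of germs at $x$: it gives $(\g'-\g'')(\a_1,\dots,\a_{p-k})\in\sum_jf_j\La^{p-k}M^*$ evaluated on $\a_i\in K$. To deduce that $\g'(z)$ and $\g''(z)$ agree on the whole subspace $K_z=\bigcap_i\ker df_i(z)=T_zZ_{reg}$ at a strongly regular point $z\in Z_{rreg}$ --- a point generally \emph{different} from $x$, since $x$ itself may be a singular point of $Z$ --- you need every tuple of vectors in $K_z$ to arise as values $\a_1(z),\dots,\a_{p-k}(z)$ of elements of the module $K$, i.e., surjectivity of the evaluation map $K\to K_z$. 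Your phrase ``read pointwise'' silently assumes this, and it does not follow from (TS), which is a purely pointwise identity. The surjectivity is in fact true at strongly regular points, but establishing it requires coherence of the kernel sheaf of $\Theta_N\to\OO^k$ together with Cartan's Theorem A on a Stein representative, plus the identity theorem to propagate the germ identity at $x$ out to $z$ --- machinery your proposal never invokes. The paper avoids all of this by arguing directly: subtract two representations of $a^n\eta$, evaluate at $z\in Z$ where the $f_j$ vanish (which also disposes of the ``modulo $\sum_jf_j\La^{p-k}M^*$'' qualifier you dropped), obtaining $df_1(z)\w\cdots\w df_k(z)\w(\g'-\g'')(z)=0$, and then use the elementary linear-algebra fact that a skew form annihilated by a wedge of $k$ linearly independent covectors vanishes on their common kernel. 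You should either adopt that two-line pointwise argument or supply the sheaf-theoretic surjectivity step; as written, part (iii) is not proved.
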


Note that (A') can be transformed to formula (LR) in Theorem \ref{thm-logarithmic residua} with $\o=\eta/(f_1,\dots,f_k)$.

\begin{remark} \rm
It can be seen that statement (ii) implies Theorem 1 in Alexandrov \cite{A1} which is basic in his exposition of fundamentals of multidimensional complex residua theory. Statement (i) seems new.
\end{remark}

{\it Proof of Theorem \ref{thm-complex logarithmic residua}.}
Statements (i) and (ii) of the theorem follow from the corresponding statements in Corollary \ref{cor-logarithmic residua} if we take there $R= \OO_x(N)$ - the ring of holomorphic function germs, $M=\Vect_x(N)$ -  the $R$-module of germs at $x$ of holomorphic vector fields on $N$, and its dual $M^*=\La_x(N)$ - the module of germs at $x$ of holomorphic one forms on $N$. Then $\La^qM^*$ coincides with the $R$-module of germs at $x$ of holomorphic $q$-forms on $N$, $\La^qM^*=\La^q_x(N)$. The ring $\OO_x(N)$ is Noetherian thus the module of elements $\eta\in\La^p_x(N)$ satisfying condition (B) is finitely generated. Thus the exponent $n$ in $b=a^n$ in the formula (A') can be chosen independent of $\eta$, as in Theorem \ref{thm1-existence}. The depth condition (DC) in Corollary \ref{cor-logarithmic residua} is equivalent to the codimension condition (CD) above. This follows from the known fact in analytic geometry that the codimension of the set of zeros of a family of holomorphic functions germs in  $\OO_x(N)$ is equal to the depth of the ideal generated by these functions germs in  $\OO_x(N)$, see e.g. Chapter 5.3 in \cite{GH}. We obtain that $\codim_xZ_{f,\O}=\dpt I(f_1,\dots,f_k,\O)=k+\dpt I(\bO)$, where in the second equality we use Proposition \ref{prop1}. Thus the inequality $p\le \codim_xZ_{f,\O}-2$ holds if and only if $p-k\le\dpt I(\bO)-2$. This shows that Corollary \ref{cor-logarithmic residua} implies statements (i) and (ii) in Theorem \ref{thm-complex logarithmic residua}.

To show statement (iii) above suppose that we have two different representations (A') of the same $b\eta$ with $\g'$ and $\g''$ on the right side. Subtracting one from the other gives zero on the left side. Additionally, at points $z\in Z$ the functions $f_i$ vanish thus, denoting $\Delta\g=\g'-\g''$, we find that
\[
df_1(z)\w\cdots\w df_k(z)\w\Delta\g(z)=0,\ \ \ z\in Z.
\]
At strongly regular points $z\in Z_{rreg}$ the one forms $df_1(z),\dots,df_k(z)$ are linearly independent. Thus, on their common kernel $K(z)=\ker df_1(z)\cap\cdots\cap \ker df_k(z)$ the $(p-k)$-form $\Delta(z)$ vanishes (linear algebra). This proves that $\g'$ and $\g''$ restricted to the subspace $K(z)$ coincide at points $z\in Z_{rreg}$.  By formula (TS) we have $K(z)=T_zS_{reg}$, thus $\g'$ and $\g''$ coincide on $T_zS_{reg}$.
\hfill$\Box$

\section{Proof of Theorem \ref{thm1-existence}}

Throughout the proof $R$ denotes a unital commutative ring, $M$ is a free module over $R$ of rank $m$ and $\o_1,\dots,\o_k$ are fixed elements of $M$, where $k\le m$. Nonnegative integers $p,r,s$ satisfy the relations $m\ge p\ge r$ and $r+s=k+1$, as in Theorem \ref{thm1-existence}.The starting point is the following elementary version of statement (i) in Theorem \ref{thm1-existence}.

\begin{lemma}\label{lem1}
If the elements $\o_1,\dots,\o_k\in M$ can be completed to a basis in $M$ then for any $\eta\in\La^pM$ condition (B) in Theorem \ref{thm1-existence} implies condition (A).
\end{lemma}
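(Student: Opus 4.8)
The plan is to reduce to the case where the $\o_i$ are literally the first $k$ vectors of a chosen basis, then translate both conditions into explicit statements about the coordinate coefficients of $\eta$, and finally read off the representation (A) by a bookkeeping of indices. First I would use the hypothesis that $\o_1,\dots,\o_k$ extend to a basis: choose $e_{k+1},\dots,e_m$ so that $e_1:=\o_1,\dots,e_k:=\o_k,e_{k+1},\dots,e_m$ is a basis of $M$. Relative to the induced basis (E) of $\La^pM$, write $\eta=\sum_{L\in\JJ(p,m)}a_Le_L$. For a multiindex $L$ I write $L'=L\cap\{1,\dots,k\}$ for the part of $L$ lying in the $\o$-block.

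Second, I would compute condition (B) coefficientwise. For $I\in\JJ(s,k)$ the product $e_I\w e_L$ is $0$ when $I\cap L\ne\emptyset$ and $\pm e_{I\cup L}$ otherwise; since $L$ is recovered from $I\cup L$ as $(I\cup L)\setminus I$, the coefficient of each basis vector of $\La^{s+p}M$ in $e_I\w\eta$ is, up to sign, a single $a_L$. Hence $e_I\w\eta=0$ is equivalent to $a_L=0$ for every $L$ disjoint from $I$. Ranging over all $I\in\JJ(s,k)$, condition (B) becomes: $a_L=0$ whenever $\{1,\dots,k\}\setminus L$ contains some $s$-element subset, that is, whenever $|L'|\le k-s=r-1$, where I use $r+s=k+1$. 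Equivalently, the only surviving coefficients are those with $|L'|\ge r$.

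Third, I would produce the $\g_J$. For each $L$ with $|L'|\ge r$ let $J(L)\in\JJ(r,k)$ consist of the first $r$ elements of $L'$; then $e_L=\pm\,e_{J(L)}\w e_{L\setminus J(L)}$ with $L\setminus J(L)\in\JJ(p-r,m)$. Setting $\g_J:=\sum_{L:\,J(L)=J}\pm a_L\,e_{L\setminus J}\in\La^{p-r}M$ regroups $\eta=\sum_{|L'|\ge r}a_Le_L$ exactly as $\sum_{J\in\JJ(r,k)}\o_J\w\g_J$, which is (A).

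The argument is entirely combinatorial once the basis is fixed, so there is no analytic obstacle; the one point requiring care is the coefficientwise analysis of (B), namely verifying that for fixed $I$ each coefficient equation isolates a single $a_L$ (so that (B) genuinely forces every coefficient with $|L'|\le r-1$ to vanish while imposing nothing on the rest), together with keeping the permutation signs consistent when the very same regrouping is used to assemble the $\g_J$.
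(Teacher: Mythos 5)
Your proof is correct and takes essentially the same approach as the paper: the paper likewise completes $\o_1,\dots,\o_k$ to a basis, expands $\eta$ in the induced basis of $\La^pM$ (written there as products $\o_J\w\a_K$), and observes that condition (B) forces the vanishing of exactly those coefficients whose $\o$-block part has length $<r$, so the surviving terms regroup into the form (A); your single-multiindex bookkeeping with $L'=L\cap\{1,\dots,k\}$ is just a reindexing of the paper's pairs $(J,K)$, and your explicit choice $J(L)$ of the first $r$ elements of $L'$ spells out the paper's concluding regrouping step.
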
  

\begin{proof}
Let elements $\a_1,\dots,\a_{m-k}\in M$ complete $\o_1,\dots.\o_k$  to a basis of $M$. Then we have a basis of $\La^pM$ which consists of exterior products of the form
\[
\o_J\w\a_K=\o_{j_1}\w\cdots\w\o_{j_{\Jang}}\w\a_{k_1}\w\cdots\w\a_{k_{\Kang}},
\]
with  strictly monotone (possibly empty) multiindices $J$ and  $K$ such that their lengths  $\Jang$ and $\Kang$ satisfy $\Jang+\Kang=p$ (see the notation introduced in the beginning of Section \ref{Main results}).  Suppose that $\eta\in \La^pM$ is written as a linear combination of the elements of this basis,
\[
\eta=\sum_{J,K}a_{J,K}\,\o_J\w\a_K. \eqno{(E)}
\]
To prove that condition (A) holds it is enough to show that $\Jang\ge r$ in this expansion. Condition (B) states that $\o_I\w\eta=0$ for all strictly monotone multiindices $I$ of length $\Iang=s$. Assume first that $s+p\le m$ i.e., a priori, the form $\o_I\w\eta\in \La^{p+s}M$ can be nontrivial.  We have
\[
\o_I\w\eta=\sum_{J,K}a_{J,K}\,\o_I\w\o_J\w\a_K. \eqno{(E)}
\]
The products $\o_I\w\o_J\w\a_K$ are elements of a basis in $\La^{p+s}M$ whenever $\o_I$ and $\o_J$ do not contain the same $\o_i$, i.e., when $I\bigcap J=\emptyset$ with $I$ and $J$ treated as sets. Thus, condition $\o_I \w\eta=0$  implies that $a_{J,K}=0$ for $J$ such that $I\bigcap J=\emptyset$. Given $J$ with $\Jang<r$, we can always find $I$ with $\Iang=s$, $s+r=k+1$, such that $J\cup I=\emptyset$, thus $a_{J,K}=0$ for any $(J,K)$ with $\Jang<r$.  This means that all nonzero coefficients $a_{J,K}$ in the expansion (E) have $\Jang\ge r$, i.e. (A) holds.

Assume now that $p+s>m$. This, due to $r+s=k+1$, is equivalent to $p-r\ge m-k$.  Since the products $\a_K$ in (E) are nontrivial only when $\Kang\le m-k$, this gives that they are nontrivial only when $\Kang \le p-r$. Taking into account that $\Kang+\Jang=p$ we see that possible nontrivial terms in (E) satisfy $\Jang \ge r$ which implies condition (A).
\end{proof}

In the general proof we will use the ensuing special case of statement (i) of Theorem \ref{thm1-existence} which was proved in \cite{S} for $R$ Noetherian and in \cite{J}, Theorem 2.3, without this assumption.

\begin{lemma}\label{lem2}
If $p<\dpt I(\O)$ and $\eta\in\La^pM$ satisfies $\O\w\eta=0$ then $\eta=\sum\o_i\w\g_i$ for some  $\g_1,\dots,\g_k\in \La^{p-1}M$.
\end{lemma}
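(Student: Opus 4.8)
The statement to prove is the forward implication only: that $\O\w\eta=0$ forces $\eta=\sum_i\o_i\w\g_i$; the reverse is trivial (Remark \ref{rem1}). Write $N'=\sum_{i=1}^k\o_i\w\La^{p-1}M$ for the target submodule and $Q=\La^pM/N'$, and let $\bar\eta\in Q$ be the class of $\eta$. The plan is to split the argument into two stages mirroring the two statements of Theorem \ref{thm1-existence}: first, a purely ``generic'' step producing the weaker conclusion $a^N\eta\in N'$ for each generator $a$ of $I(\O)$; and second, a depth step that removes these multipliers. Only the second stage uses the hypothesis $p<\dpt I(\O)$.

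For the first stage I would fix a basis $e_1,\dots,e_m$ of $M$, write $\o_i=\sum_j c_{ij}e_j$, and recall that the coefficients of $\O$ are exactly the $k\times k$ minors $a_I$, $I\in\JJ(k,m)$, of the matrix $(c_{ij})$, so that these minors generate $I(\O)$. Fix one such minor $a=a_I$ and pass to the localization $R_a=R[1/a]$, $M_a=M\otimes_R R_a$. Over $R_a$ the corresponding $k\times k$ block of $(c_{ij})$ is invertible, so $\o_1,\dots,\o_k$ can be completed to a basis of $M_a$; since $\O\w\eta=0$ localizes to the hypothesis of Lemma \ref{lem1} over $R_a$ (note condition (B) here is the single equation $\O\w\eta=0$), that lemma gives $\eta=\sum_i\o_i\w\g_i$ in $\La^pM_a$. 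Reading this equality through the definition of the localization yields, for some exponent $N$, an honest identity $a^N\eta=\sum_i\o_i\w\mu_i$ in $\La^pM$ with $\mu_i\in\La^{p-1}M$; crucially this step survives even if $a$ is a zero-divisor, since I never need injectivity of $\La^pM\to\La^pM_a$. Ranging over the finitely many minors $a_I$ shows that $\bar\eta$ is killed by a power of each generator of $I(\O)$, hence $I(\O)^n\bar\eta=0$ for some $n$. This is precisely statement (ii) of Theorem \ref{thm1-existence} in the case $r=1$.

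The second stage, where I expect the real difficulty to lie, is to deduce $\bar\eta=0$ from $I(\O)^n\bar\eta=0$ together with $p<\dpt I(\O)$. It would suffice to produce a single element $b\in I(\O)$ that is a non-zero-divisor on $Q$: then $b^n\bar\eta=0$ forces $\bar\eta=0$. Equivalently, one must bound the depth of $I(\O)$ on the quotient $Q$ from below, the natural target being $\dpt_Q I(\O)\ge \dpt_R I(\O)-p>0$, suggested by the presentation $\bigoplus_{i=1}^k\La^{p-1}M\to\La^pM\to Q\to0$. \textbf{This is the main obstacle.} In the Noetherian setting it can be handled by associated primes: if $I(\O)$ consisted only of zero-divisors on $Q$ it would lie in some associated prime of $Q$, contradicting the depth estimate. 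But the paper insists on dropping the Noetherian hypothesis, so I would instead argue by induction on $\dpt I(\O)$: choose a non-zero-divisor $a\in I(\O)$, pass to $R/(a)$ where the depth of the image of $I(\O)$ drops by one, and relate $\eta$ to the analogous problem there. The delicate bookkeeping is that the form-degree $p$ does not decrease under this reduction while the depth does, so matching the inductive hypothesis at the boundary $p=\dpt I(\O)-1$ requires care; this is exactly the point at which the non-Noetherian argument of \cite{J} must be invoked.
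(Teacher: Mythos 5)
Your Stage 1 is correct, and it is essentially verbatim the paper's own localization argument for statement (ii) of Theorem \ref{thm1-existence}, specialized to $r=1$: invert a minor $a$ of $\O$, observe that $[\o_1],\dots,[\o_k]$ then extend to a basis of the localized module, apply Lemma \ref{lem1} (for $r=1$ condition (B) is indeed the single equation $\O\w\eta=0$), and clear denominators; your remark that no injectivity of $\La^pM\to\La^pM_a$ is needed is also right. But this stage is not where the content of Lemma \ref{lem2} lies, and the paper obtains it independently of the lemma.

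The genuine gap is Stage 2, which you announce but never execute. The reduction to producing a single $b\in I(\O)$ that is a \nzd on $Q=\La^pM/\sum_i\o_i\w\La^{p-1}M$ is logically sound, but the target bound $\dpt_Q I(\O)\ge \dpt I(\O)-p$ does not follow from the two-term presentation you display: depth estimates of that kind require an acyclic resolution of $Q$ of length at most $p$, and the natural candidate resolution---the generalized Koszul complex built by wedging with $\o_1,\dots,\o_k$---is acyclic in the relevant range precisely by the statement being proved, so this route is circular. The same objection undermines your Noetherian associated-primes sketch: to contradict $I(\O)\subset P$ for some $P\in\operatorname{Ass}(Q)$ one needs $\operatorname{pd}_{R_P}Q_P\le p$ (via Auslander--Buchsbaum), which again presupposes that resolution. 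Your fallback induction on $\dpt I(\O)$ through $R/(a)$ fails exactly where you concede it does: passing to the quotient drops the depth to $d-1$ while $p$ is unchanged, so at the boundary $p=d-1$ the inductive hypothesis is unavailable; and the final move---``invoke the non-Noetherian argument of \cite{J}''---is not a repair, because \cite{J}, Theorem 2.3, \emph{is} Lemma \ref{lem2}, so citing it there renders the rest of your argument redundant. For comparison, this is also how the paper itself treats the statement: it gives no proof, importing the lemma from \cite{S} (for $R$ Noetherian) and \cite{J} (in general) as the base ingredient of the induction in Section 6. Read as a self-contained proof, your proposal has a hole at its decisive step; read charitably, it collapses to the paper's citation with a superfluous Stage 1.
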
  
 
We shall also need the following known fact (for an easy proof see e.g. \cite{J}, Lemma 2.6)

\begin{lemma}\label{lem3}
If a sequence $a_1,\dots,a_r\in R$ is regular then, for any $n\ge 1$, the sequence $a_1^n, a_2,\dots,a_r$ is also regular.
\end{lemma}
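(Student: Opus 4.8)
The plan is to not verify the defining conditions of a regular sequence for $a_1^n,a_2,\dots,a_r$ index by index, but to repackage the whole assertion as a single regularity statement for the \emph{tail} $a_2,\dots,a_r$ over the quotient module $R/a_1^nR$. First I would record two easy reductions. Since a product of non-zero-divisors is again a non-zero-divisor, $a_1^n$ is a \nzd on $R$ (this uses only that $a_1$ is, which is the first regularity condition). Next, by the very definition of a regular sequence, $a_1^n,a_2,\dots,a_r$ is regular on $R$ if and only if $a_1^n$ is a \nzd on $R$ \emph{and} $a_2,\dots,a_r$ is a regular sequence on $R/a_1^nR$. So everything reduces to showing that $a_2,\dots,a_r$ is regular on the $R$-module $R/a_1^nR$. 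The hypothesis furnishes this same regularity over $R/a_1R$ (that is exactly what it means for $a_2,\dots,a_r$ to be regular after $a_1$), so the task is to transport regularity from $R/a_1R$ to $R/a_1^nR$. (If one's definition of regular sequence also demands properness of the generated ideal, it is inherited for free, since $(a_1^n,a_2,\dots,a_r)\subseteq(a_1,a_2,\dots,a_r)$.)

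The transport is powered by an extension principle that I would isolate and prove first: given a short exact sequence $0\to M'\to M\to M''\to 0$ of $R$-modules and a sequence $\mathbf b=b_1,\dots,b_t$ that is regular on both $M'$ and $M''$, the sequence $\mathbf b$ is regular on $M$. For a single element $b$ this is a short diagram chase: if $bm=0$ then $b$ annihilates the image of $m$ in $M''$, hence $m\in M'$, and $b$ being a \nzd on $M'$ forces $m=0$. For the inductive step on $t$ I would apply the snake lemma to multiplication by $b_1$ on the given sequence; because $b_1$ is a \nzd on $M''$ the connecting homomorphism vanishes and one obtains the short exact sequence $0\to M'/b_1M'\to M/b_1M\to M''/b_1M''\to 0$, to which the inductive hypothesis applies with $b_2,\dots,b_t$.

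With the extension principle in hand I would finish by induction on $n$. For $n=1$ there is nothing to prove. For the inductive step I would use that $a_1^{n-1}$ is a \nzd to check that
\[
0\longrightarrow R/a_1R \stackrel{a_1^{n-1}\cdot}{\longrightarrow} R/a_1^nR \longrightarrow R/a_1^{n-1}R\longrightarrow 0
\]
is exact, the left map being multiplication by $a_1^{n-1}$ (well defined and injective precisely because $a_1^{n-1}$ is a \nzd, with image the submodule $(a_1^{n-1})/(a_1^n)$). Now $a_2,\dots,a_r$ is regular on the submodule $R/a_1R$ by hypothesis and on the quotient $R/a_1^{n-1}R$ by the induction hypothesis, so the extension principle yields regularity on $R/a_1^nR$. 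Combined with the two reductions above, this proves that $a_1^n,a_2,\dots,a_r$ is regular.

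I expect the snake-lemma step in the extension principle to be the crux, and to be the one place where non-Noetherianness could conceivably cause trouble: the point is that the hypothesis \emph{regular on $M''$} supplies exactly the non-zero-divisor property that kills the connecting map, so no finiteness assumption is needed. It is worth saying why the naive route is blocked, since this is what the filtration argument is designed to avoid. Checking regularity of $a_1^n,a_2,\dots,a_r$ directly would require, at the $j$-th stage, cancelling a factor $a_1$ modulo the ideal $(a_2,\dots,a_{j-1})$, i.e. using that $a_1$ is a \nzd on $R/(a_2,\dots,a_{j-1})$. That is a \emph{reordering} of the original regular sequence, which can fail without a Noetherian or local hypothesis. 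The argument above never permutes the sequence: $a_1$ always stays in front, and only the module underneath it changes from $R/a_1R$ to $R/a_1^nR$.
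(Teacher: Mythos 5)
Your proof is correct, but a direct comparison with the paper is not quite possible: the paper does not prove Lemma~\ref{lem3} itself, it cites \cite{J}, Lemma 2.6, for ``an easy proof'' (and records the stronger statement, for arbitrary powers $a_1^{i_1},\dots,a_r^{i_r}$, as Property (4) of the Appendix with a pointer to \cite{BJ}). So what you have supplied is a self-contained argument where the paper offers only a reference, and it is the standard d\'evissage, carried out cleanly. Every step checks: the reduction of the lemma to ``$a_2,\dots,a_r$ is regular on $R/a_1^nR$'' is just the definition unwound, since $a_i$ being a non-zero-divisor on $(R/a_1^nR)/(a_2,\dots,a_{i-1})(R/a_1^nR)\simeq R/(a_1^n,a_2,\dots,a_{i-1})$ is exactly the required condition, and properness of $(a_1^n,a_2,\dots,a_r)$ is inherited from its inclusion in $(a_1,\dots,a_r)$; symmetrically, the hypothesis gives regularity of $a_2,\dots,a_r$ on $R/a_1R$ by the same unwinding. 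The extension principle is sound: in the snake lemma applied to multiplication by $b_1$, the hypothesis that $b_1$ is a non-zero-divisor on $M''$ makes $\ker(b_1\colon M''\to M'')=0$, so the connecting map is zero and the sequence $0\to M'/b_1M'\to M/b_1M\to M''/b_1M''\to 0$ is exact; no finiteness is used anywhere. And the filtration sequence $0\to R/a_1R\to R/a_1^nR\to R/a_1^{n-1}R\to 0$, with the first map multiplication by $a_1^{n-1}$, is exact precisely because $a_1^{n-1}$ is a non-zero-divisor on $R$, which you have from the first regularity condition. Your closing remark is also well judged in the context of this paper: the naive index-by-index verification at stage $j$ would require $a_1$ to be a non-zero-divisor modulo $(a_2,\dots,a_{j-1})$, i.e.\ a permuted regular sequence, and permutability can fail outside the local Noetherian setting (compare Property (8) of the Appendix, which assumes $R$ local); your argument never permutes and so is valid in the generality the paper insists on, namely without Noetherian hypotheses.
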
 
  
{\bf Proof of Theorem \ref{thm1-existence}. Statement (ii).} 
We first prove the second statement which will be used for proving the first one. It is enough to prove that condition (A') holds  for the coefficients of $\O$. Indeed, if it holds for any such coefficient with some power $n$ then it holds for an arbitrary element of $I(\O)$ (which is a finite linear combinations of the coefficients) with a sufficiently large power depending on $n$ and on the length of the linear combination. 

Let $a$  be a coefficient of $\O$. We may assume that $a$ is not nilpotent, otherwise condition (A') trivially holds. 
Consider the multiplicative set of nonnegative powers of $a$, $\AA=\{a^i\}_{i\ge 0}$, where $a^0=1$. Let $\Rloc$ denote the localization of $R$ with respect to $\AA$. Elements of the ring $\Rloc$ can be represented as sums of ``fractions'' of the form $b/a^i$, $b\in R$. Similarly, let $\Mloc$ denote the localization of  $M$ with respect to $\AA$, with elements represented as sums of ``fractions'' of the form $m/a^i$, $m\in M$. Then $\Mloc$ is a module over $\Rloc$. Analogously, the modules $\La^pM$ can be localized with respect to the multiplicative set $\AA$ and these localizations are isomorphic to $\La^p\Mloc$. We have canonical homomorphisms $R\to \Rloc$ and $M\to \Mloc$ given by the transformations $c\mapsto [c]:=c/1$  and $\o\mapsto [\o]:=\o/1$. In particular, the basis $e_1,\dots,e_m$ of $M$ is transformed into the basis $[e_1],\dots,[e_m]$ of $\Mloc$ and the basis $e_I$, $I\in \JJ(p,m)$ of $\La^pM$ is transformed into the basis $[e_I]$ of $\La^p\Mloc$, where $[e_I]=[e_{i_1}]\w\cdots\w[e_{i_p}]$. 

The image $[a]$ of a coefficient $a$ of $\O$ under the homomorphism $R\to \Rloc$ is a coefficient of $[\O]=[\o_1]\w\cdots\w[\o_k]$ and is a unit in $\Rloc$. This implies that the elements $[\o_1],\dots,[\o_k]$ can be completed to a basis in $\Mloc$. Therefore, we can use Lemma \ref{lem1} for the elements $[\o_1],\dots, [\o_k]$ of $\Mloc$ and $[\eta]\in\La^p\Mloc$. Condition (B) satisfied for the elements  $\o_1,\dots, \o_k$ of $M$, $\eta\in\La^pM$ and $\o_I\in\La^sM$  implies that it is satisfied for the corresponding elements $[\o_1],\dots, [\o_k]$ of $\Mloc$, $[\eta]\in\La^p\Mloc$, and $[\o_I]=[\o_{i_1}]\w\cdots\w[\o_{i_s}]$. It follows from the lemma that 
\[
[\eta]=\sum_{J\in\JJ(r,k)}[\o_J]\w\tg_J,\ \ \tg_J\in\La^r\Mloc,
\]
where $[\o_J]=[\o_{j_1}]\w\cdots\w[\o_{j_r}]$. One can write $\tg_J=\hat\g_J/a^{n_1}$ for some $\hat\g_J\in\La^rM$ and $n_1\ge 0$, the same for all $J$.
Then, due to the definition of equality of elements in the localization $(\La^pM)_{[a]}\simeq\La^pM_{[a]}$, the above equality means that there exists $n_2\ge 0$ such that 
\[
a^{n_2}(a^{n_1}\eta-\sum_{J\in\JJ(r,k)}\o_J\w \hat \g_J)=0.
\]
This implies that equality (A') in Theorem \ref{thm1-existence} holds with $n=n_1+n_2$ and $\g_J=a^{n_2}\hat \g_J$.

The exponent $n$ found above depends on the choice of $\eta$. It can be taken independent of $\eta$ if the module of forms $\eta\in\La^pM$ satisfying condition (B) has a finite number of generators $\eta_1,\dots,\eta_\ell$. Namely, if $n_1,\dots,n_\ell$ are the exponents corresponding to the generators then it is evident from the formula (A') that the sum $n=n_1+\cdots+n_\ell$ is such a universal exponent.

\medskip

{\bf Statement (i).}
Since condition (A) trivially implies (B) (cf. Remark \ref{rem1}), it is enough to prove that condition (B) implies (A). We will use induction with respect to $1\le r\le \min\{k,p\}$, in decreasing order. The initial step of maximal $r$ is split into two subcases.

{\bf The case ${\bf r=k\le p}$.}
In this case we have $s=1$, thus condition (B) means that $\o_i\w\eta=0$ for all $1\le i\le k$. 
Let $a_1,\dots,a_d\in I(\O)$ be a regular sequence, where $d=\dpt I(\O)\ge 2$ by condition (DC). Then $a_1^n,a_2,\dots,a_d$ is also regular for any $n>0$, by Lemma \ref{lem3}. From statment (ii) it follows that there exists $n$ such that, for $b=a_1^n$,
\[
b\eta=\O\w\g,\ \ \text{for\ some}\ \ \g\in\La^{p-k}M. 
\]
Consider the quotient ring $\bR=R/bR$, the quotient module $\bM=M/bM$, and the modules $\La^q\bM\simeq\La^qM/b\La^qM$. Then we have natural homomorphisms $c\in R\mapsto \bc\in\bR$,   $\o\in M\mapsto \bo\in\bM$, and  $\eta\in \La^qM\mapsto \bar\eta\in\La^q\bM$. Applying these homomorphisms to the above equality gives
\[
0=\bO\w\bg,\ \ \text{with}\ \ \bg\in\La^{p-k}\bM,
\]
where $\bO=\bo_1\w\cdots\w\bo_k$. The sequence $\ba_2,\dots,\ba_d\in I(\bO)$ is regular in $\bR$, thus $\dpt I(\bO)\ge d-1$.  Additionally, $d-1>p-r$ by the depth assumption (DC). It follows then that $\dpt I(\bO)>p-r=p-k$. We can now apply Lemma \ref{lem2} for the quotient module, with $\bg$ playing the role of $\eta$, to deduce that
\[
\bg=\sum\bo_i\w\bg_i \ \ \text{with\ some}\ \ \bg_i\in\La^{p-k-1}\bM.
\] 
Lifting this equality to the original module gives
\[
\g=\sum\o_i\w\g_i+b\xi, 
\] 
with\ some $\g_i\in\La^{p-k-1}$ and $\xi\in\La^{p-k}M$. Plugging such $\g$ to the earlier formula $b\eta=\O\w\g$ gives
\[
b\eta-b\O\w\xi=\O\w\left(\sum\o_i\w\g_i\right) 
\]
and then
\[
b(\eta-\O\w\xi)=0, 
\]
since $\O\w\o_i=0$. Using the fact that $b=a_1^n$ was a \nzd we deduce that $\eta=\O\w\xi$, which was to be proved.

{\bf The case ${\bf r=p\le k}$.} 
By condition (DC) we have that $\dpt I(\O)\ge 2$, thus there is a regular sequence $a_1,a_2\in I(\O)$. It follows fom Lemma \ref{lem3} that $a_1^n,a_2$ is also a regular sequence, for any $n\ge 1$, and this fact will be used below. 

By statement (ii) of the theorem condition (B) implies condition (A') which means, in the case $p=r$, that there exist $n\ge 0$ and coefficients $b_J\in R$ such that 
\[
a_1^n\eta=\sum_Jb_J\o_J,
\] 
where $J\in\JJ(r,k)$. Denote $b=a_1^n$ and consider the quotient ring $\bR=R/bR$ and the quotient module $\bM=M/bM$. After passing to the quotients the above equality  reads 
\[
0=\sum_J\bb_J\bo_J,\  \  \bb_J\in \bR, \  \bo_J\in\La^r\bM,
\] 
with $\bo_J=\bo_{j_1}\w\cdots\w\bo_{j_r}$. For a given  $J\in \JJ(r,k)$ let $J'\in \JJ(k-r,k)$ denote the multiindex which is complementary to $J$, i.e., $J\bigcap J'=\emptyset$. Then multiplying both sides of the above equality by $\bo_{J'}$ gives 
\[
0=\bb_J\bo_J\w\bo_{J'}=\pm\bb_J\bO
\]
where $\bO=\bo_1\w\cdots\w\bo_k$. Since the ideal $I(\bO)$ contains the \nzd\ $\ba_2$ (the sequence $a_1^n,a_2$ is regular), we deduce that all $\bb_J=0$. This means that $b_J=bc_J=a_1^nc_J$, for some $c_J\in R$. Plugging such $b_J$ to the formula for $a_1^n\eta$ gives
\[
a_1^n\eta=a_1^n\sum_Jc_J\o_J,\  \  c_J\in R.
\] 
The fact that $a_1^n$ is a \nzd in $R$ implies that $\eta=\sum_Jc_J\o_J$, which was to be shown.

{\bf The case $\bf  r <\min\{p,k\}$.} 
To prove statement (i) for all $r<\min\{k,p\}$ assume it is true for $r+1$. Denote $d=\dpt I(\O)$ and let $a_1,\dots,a_d$ be a regular sequence in $I(\O)$ where, by our assumption, $d\ge p-r+2$.  By Lemma \ref{lem3} the sequence $a_1^n,a_2,\dots,a_d$ is also regular for any $n$. Condition (B) implies condition (A'), by statement (ii), thus for $\eta$ satisfying (B) we have
\[
a_1^n\eta= \sum_{J\in \JJ(r,k)}\o_J\w\g_J
\]
for some $n\ge 1$ and $\g_J\in \La^{p-r}(M)$.

Denote $b=a_1^n$. As earlier, we introduce the quotient ring $\bR=R/bR$ and the quotient modules $\bM=M/bM$, $\La^i\bM\simeq\La^iM/b\La^iM$. Under canonical homomorphisms elements of the original ring and of the corresponding modules have their canonical images, denotesd with bars, in the quotient ring and modules.  For  $\bO=\bo_1\w\cdots\w\bo_k$ we have  $\dpt I(\bO)\ge d-1$ since the sequence $\ba_2,\dots,\ba_d\in I(\bO)$ is regular. When replacing the elements in the above equality with their counterparts in the quotient objects we get zero on the left side, thus
\[
0= \sum_{J\in \JJ(r,k)}\bo_J\w\bg_J
\]
where $\bo_J=\bo_{j_1}\w\cdots\w\bo_{j_r}$ and $\bg_J\in\La^{p-r}\bM$.  Pick a multiindex $J\in \JJ(r,k)$ and let $J'\in \JJ(r',k)$ be its complement satisfying $r'+r=k$ and $J\bigcap J'=\emptyset$. Multiplying both sides of the above equality by $\bo_{J'}$ we obtain
\[
0=\bo_{J'}\w\bo_J\w\bg_J=\pm\bO\w\bg_J,
\]
since all other products in the sum vanish as they contain a repeated $\bo_i$.  We can now use Lemma \ref{lem2} with $\bg_J\in\La^{p-r}\bM$ playing the role of $\eta$ and $\O$ replaced with $\bO$. Namely, $p-r <\dpt I(\O)-1$, by the depth assumption (DC), and $d=\dpt I(\O)\le \dpt I(\bO)+1$, by the inequality mentioned earlier. Therefore the asumption $p-r<\dpt I(\bO)$  required in the lemma is satisfied and we deduce that
\[
\bg_J=\sum_i\bo_i\w\bg_{J,i}
\]
for some $\bg_{J,i}\in \La^{p-r-1}\bM$.  

The above equaliy can be written in the original modules as
\[
\g_J=\sum_i\o_i\w\g_{J,i}+b\xi_J,
\]
for some $\g_{J,i}\in \La^{p-r-1}M$ and $\xi_J\in \La^{p-r}M$. Plugging such $\g_J$ to the formula for $a_1^n\eta$ and taking into account that $b=a_1^n$ we obtain 
\[
a_1^n(\eta-\sum_J\o_J\w\xi_J)=\sum_J\sum _i\o_J\w\o_i\w\g_{J,i}.
\]
Multiplying both sides by $\o_{I'}$, with arbitrary $I'\in\JJ(k-r,k)$, we find that
\[
a_1^n(\eta-\sum_J\o_J\w\xi_J)\w\o_{I'}=0
\]
since each product $\o_J\w\o_i\w\o_{I'}$ vanishes as it has a repeated $\o_j$ for some $j\in\{1,\dots,k\}$. From the fact that $a_1^n$ is a \nzd we deduce that
\[
(\eta-\sum_J\o_J\w\xi_J)\w\o_{I'}=0,
\]
for any $I'\in \JJ(k-r,k)$. Thus the form $\eta-\sum_J\o_J\w\xi_J$ satisfies condition (B) with $s'=k-r$. 

Putting $r'=r+1$ we have $r'+s'=k+1$. Thus, we we can  use the induction assumption that statement (i) holds for $r+1$ and conclude that the element $\eta'=\eta-\sum \o_J\w\xi_J$ has a representation
\[
\eta-\sum_J\o_J\w\xi_J=\sum_{\hJ}\o_{\hJ}\w\hg_{\hJ}
\]
with $\hJ$ in $ \JJ(r+1,k)$ and  some $\hg_{\hJ}\in\La^{p-r-1}\bM$. Since $\o_{\hJ}$ are products of $r+1$ forms among $\o_1,\dots,\o_k$, the sum on the right side can be rearranged to a sum $\sum_J\o_J\w\widetilde{\g}_J$, with $J\in\JJ(r,k)$ and $\widetilde{\g}_J\in \La^{p-r}M$.
This implies that $\eta$ can be written in the form (A) and ends the proof.
\hfill $\Box$

%\section{Concluding remarks and open questions}

\section{Appendix: properties of the depth}

Let $R$ be a commutative ring with unit. Recall that a sequence of elements $a_1,\dots,a_q$ of $R$ is called {\it regular} if $(a_1,\dots,a_q)\not=R$ and $a_i$ is a non-zero-divisor on $R/(a_1,\dots,a_{i-1})$ for $i=1,\dots,q$ (in particular, $a_1$ is a non-zero-divisor on $R$). Here $(a_1,\dots,a_i)$ denotes the ideal in $R$ generated by the elements $a_1,\dots,a_i$. 

Given a proper ideal $I\subset R$, the \emph{depth} of $I$, denoted $\dpt I$, is the supremum of lengths of regular sequences in $I$. Additionally, one defines $\dpt R=\infty$. Below we list several properties which can be useful when verifying the depth condition (DC).
\begin{itemize}
\item[(1)] If  $I_1, I_2$ are ideals in $R$ and $I_1\subset I_2$ then $\dpt I_1\le \dpt I_2$ (trivial).
\item[(2)]  If $a_1,\dots,a_q\in I$ is a regular sequence then $\dpt I\ge \dpt I/(a_1,\dots,a_q)+q$.
\item[(3)] If for a fixed $i$ the sequences $a_1,\dots,a_{i-1},b,a_{i+1},\dots,a_r$ and $a_1,\dots,a_{i-1},c,a_{i+1},\dots,a_r$ are regular, and $a_i=bc$, then $a_1,\dots,a_{i-1},a_i,a_{i+1},\dots,a_r$ is regular. Vice versa, if $a_1,\dots,a_i,\dots,a_r$ is regular, with $a_i=bc$, and $(a_1,\dots,a_{i-1},b,a_{i+1},\dots,a_r)R\not=R$ then $a_1,\dots,a_{i-1},b,a_{i+1},\dots,a_r$ is regular. 
\item[(4)] A sequence $a_1,\dots,a_r$ is regular if and only if $a_1^{i_1},\dots,a_r^{i_r}$ is regular for given $i_1,\dots,i_r\ge 1$ (follows from (3)). Therefore, the depth of $I$ is equal to the depth of the radical of $I$.
\end{itemize}
For Propertites (3) and (4), see e.g. \cite{BJ}, Exercises in Chapter 7.1. 

For $R$ Noetherian we have  the following additional properties (cf. \cite{E} or \cite{BJ}, Chapter 7.1).
\begin{itemize}
\item[(5)] If $a_1,\dots,a_r$ is a maximal (with respect to inclusion) regular sequence in $I$ then $r=\dpt I$.
\item[(6)] Any regular sequence $a_1,\dots,a_q$ in a proper ideal $I\subset R$ can be completed to a maximal regular sequence $a_1,\dots,a_q,\dots,a_r$ in $I$.
\item[(7)] If $a_1,\dots,a_q\in I$ is a regular sequence then $\dpt I=\dpt(I/(a_1,\dots,a_q))+q$  (follows from (5) and (6)). 
\item[(8)] If $R$ is local then any permutation of a regular sequence is regular.
\item[(9)] The depth, the hight and the minimal number of generators of $I$, denoted $\gen I$, satisfy the inequalities $\dpt I\le \hgt I \le \gen I$.
\item[(10)] $\dpt I=\gen I$ if and only if $I$ is generated by a regular sequence.
\item[(11)] If $R$ is a Cohen-Macauley ring then $\dpt I=\hgt I$ for any ideal $I\in R$ (and vice versa).
\item[(12)] If $\o=(a_1\dots,a_r)\in R^r$ and the ideal $I$ in $R$ generated by $a_1,\dots,a_r$ is proper and nonzero then $\dpt I$ is equal to the maximal $p$ such that $i$th cohomology $H_i:\im \partial_{i-1}/\ker \partial_i$ in the Koszul complex $\partial_i:\La^iR^r\to \La^{i+1}R^r$ defined by $\eta\mapsto \o\w\eta$ vanishes for all $i< p$.
\end{itemize}

\end{document}